\newtheorem{theorem}{Theorem}[section]
\newtheorem{lemma}[theorem]{Lemma}
\newtheorem{corollary}[theorem]{Corollary}
\newtheorem{proposition}[theorem]{Proposition}
\Crefname{assumption}{Assumption}{Assumptions}
\newtheorem{definition}[theorem]{Definition}
\providecommand{\keywords}[1]
{
  \small	
  \textbf{\textit{Keywords---}} #1
}
\newcommand{\edit}[1]{\textcolor{black}{#1}}
\newcommand{\E}{\Embb}
\newcommand{\R}{\Rmbb}
\newcommand{\Ek}{\E_k}
\newcommand{\Einit}{\E_0}
\newcommand{\Ekb}[1]{\Ek\bracks{#1}}
\newcommand{\parens}[1]{\left( #1 \right)}
\newcommand{\bracks}[1]{\left[ #1 \right]}
\newcommand{\norm}[1]{\left\| #1 \right\|}
\newcommand{\normsq}[1]{\norm{#1}^2}
\newcommand{\abs}[1]{\left| #1 \right|}
\newcommand{\abssq}[1]{\left| #1 \right|^2}
\newcommand{\qqtext}[1]{\qquad \text{#1} \qquad}
\newcommand{\thup}{^{\text{th}}}
\DeclareMathOperator{\row}      {row}
\DeclareMathOperator{\Diag}     {Diag}
\newcommand{\cM}{{\cal M}}
\newcommand{\mA}{\mathbf{A}}
\newcommand{\mP}{\mathbf{P}}
\newcommand{\mD}{\mathbf{D}}
\newcommand{\mQ}{\mathbf{Q}}
\newcommand{\mI}{\mathbf{I}}
\newcommand{\mG}{\mathbf{G}}
\newcommand{\vv}{\mathbf{v}}
\newcommand{\vx}{\mathbf{x}}
\newcommand{\vb}{\mathbf{b}}
\newcommand{\Rmbb}{\mathbb{R}}
\newcommand{\Embb}{\mathbb{E}}
\newcommand{\ik}{{i_k}}
\newcommand{\indepset}{\cM}
\newcommand{\xinit}{{\vx^0}}
\newcommand{\xone}{{\vx^1}}
\newcommand{\xk}{{\vx^k}}
\newcommand{\xkpo}{{\vx^{k+1}}}
\newcommand{\xopt}{{\vx^\star}}
\newcommand{\vbi}{\vb_i}
\newcommand{\vbj}{b_j}
\newcommand{\mAi}{\mA_i}
\newcommand{\mAik}{\mA_{i_k}}
\newcommand{\vbik}{b_{i_k}}
\newcommand{\nrows}{m}
\newcommand{\ncols}{n}
\newcommand{\selectset}{\mathcal{S}
}
\begin{document}

\title{Selectable Set Randomized Kaczmarz}

\author{Yotam Yaniv\footnote{Corresponding author, email: yotamya@math.ucla.edu} \footnote{University of California, Los Angeles, Department of Mathematics} , Jacob D. Moorman\footnotemark[\value{footnote}] , William Swartworth\footnotemark[\value{footnote}] ,\\ Thomas Tu\footnotemark[\value{footnote}] , Daji Landis\footnotemark[\value{footnote}] , Deanna Needell\footnotemark[\value{footnote}]}

\maketitle

\abstract{
The Randomized Kaczmarz method (RK) is a stochastic iterative method for solving linear systems that has recently grown in popularity due to its speed and low memory requirement.
Selectable Set Randomized Kaczmarz (SSRK) is an variant of RK that leverages existing information about the Kaczmarz iterate to identify an adaptive ``selectable set'' and thus yields an improved convergence guarantee. 
In this paper, we propose a general perspective for selectable set approaches and prove a convergence result for that framework. 
In addition, we define two specific selectable set sampling strategies that have competitive convergence guarantees to those of other variants of RK.
One selectable set sampling strategy leverages information about the previous iterate, while the other leverages the orthogonality structure of the problem via the Gramian matrix. We complement our theoretical results with numerical experiments that compare our proposed rules with those existing in the literature.
}
\\\keywords{Least norm solution, Stochastic iterative method, Kaczmarz method, Selectable set, Adaptive sampling}

\section{Introduction}

The Kaczmarz method \cite{karczmarz1937}, also known as the algebraic reconstruction technique in computed tomography \cite{ART1970}, has become a popular method for solving large overdetermined systems of linear equations. The method has abundant applications ranging from digital signal and image processing to statistics and machine learning. We are primarily interested in the the regime of extremely large linear systems, where it may be too expensive to load a large number of rows into memory. In this setting, the Kaczmarz method is particularly useful as it only requires loading a single row into memory at a time.  We also consider sparse systems, which yield additional benefits for the Kaczmarz method; the time required for each iteration scales linearly with the number of nonzero entries in the selected row \cite{natterer2001mathematics}.
 
To solve a system of equations $\mA \vx = \vb$ with $\mA \in \R^{\nrows \times \ncols}$, the Kaczmarz method operates iteratively, beginning with an initial vector $\xinit$ (often $\xinit = \mathbf{0}$).  On each iteration $k$, an equation $\mAik \vx = \vbik$, or equivalently row index $i_k$, is chosen and $\xkpo$ is computed as the projection of $\xk$ onto the set of solutions to that equation. Algebraically, the Kaczmarz update is given by 
\begin{equation}\label{eqn:kacz-update}
    \xkpo = \xk - \frac{\mAik \xk - b_{i_k}}{\normsq{\mAik}}\mAik^T,
\end{equation}
where $i_k$ is the index of the chosen equation, $\mAik$ is the corresponding row of the matrix $\mA$, and $\norm{\cdot}$ is the Euclidean norm. 
Algebraically projecting onto the solution of an equation $\mAik \vx = \vbik$ is equivalent to sampling a row index $i_k$ and applying \Cref{eqn:kacz-update}. Thus we refer to sampling then projecting onto \emph{equations} and sampling then applying \Cref{eqn:kacz-update} to \emph{row indices} interchangeably.

Like many iterative methods, the Kaczmarz method utilizes and depends on a sampling strategy to choose the equation for its update at each iteration.
Different sampling strategies exhibit different convergence behavior.
The first sampling strategy proven to result in linear convergence was a randomized strategy where equations are chosen at random with probabilities proportional to the corresponding squared row norms $\normsq{\mAi}$, which we deem the Randomized Kaczmarz method (RK) \cite{strohmer2009randomized} .
Subsequently, many variants of RK have been proposed and shown to converge linearly \cite{bai2018relaxed,nutini2016convergence,du2019tight,gower2015randomized,haddock2021greed,moorman2021randomized}. 

In this paper, we aim to develop Kaczmarz methods with linear convergence rates that mitigate inefficiencies in classical Kaczmarz methods by leveraging meta-information about the algorithm or problem.
For instance, if the equation chosen on iteration $k$ is already solved (i.e., $\mAik \xk = \vbik$), then \Cref{eqn:kacz-update} reduces to $\xkpo = \xk$ and the iteration is wasted. Therefore, it is desirable avoid sampling equations that are already solved by the current iterate $\xk$. In general, checking whether an equation is solved is as expensive as the update itself. However, if the system of equations $\mA \vx = \vb$ has some structure such as a known Gramian matrix $\mG = \mA \mA^T$, it can be possible to keep track of some equations that are known to be solved so that they can be avoided.
The set of equations that are not known to be solved is referred to as the \emph{selectable set} \cite{nutini2016convergence}.
In this work, we consider variants of RK that use a selectable set to avoid wasting iterations. Such variants are referred to as Selectable Set Randomized Kaczmarz methods (SSRK).

\subsection{Related Work}

Several works have considered using more general distributions in RK and have obtained similar convergence guarantees (see e.g.  \cite{gower2015randomized,needell2016stochastic,nutini2016convergence} and references therein).
A different line of work has focused on sampling strategies that depend on the iterate $\xk$ and thus change from iteration to iteration \cite{bai2018relaxed,nutini2016convergence,du2019tight,haddock2021greed}.  Most notable in the latter is the Max-Distance Kaczmarz method (MDK), also known as Motzkin's method, which chooses the equation that maximizes the normalized residual $\abs{\mAik \xk - \vbik}/\norm{\mAik}$ on each iteration.
The term max-distance refers to the fact that MDK chooses the equation that leads to the largest update, since
\begin{equation*}
    \frac{\abs{\mAik \xk - \vbik}}{\norm{\mAik}} = \norm{\xkpo-\xk}.
\end{equation*}
MDK yields a provably optimal per-iteration convergence guarantee at the expense of a high per-iteration computational cost \cite{nutini2016convergence}.

Several sampling strategies have been proposed that approximate MDK with a cheaper per-iteration cost.  For example, the Sampling Kaczmarz Motzkin method (SKM) chooses a random subset of rows and selects the maximum-residual row from that subset \cite{de2017sampling}.  This results in much cheaper per-iteration costs than MDK, while still yielding a provably better convergence guarantee than RK \cite{haddock2021greed}. 
Similarly to MDK, the Relaxed Greedy Randomized Kaczmarz method (RGRK) samples  from the equations  whose normalized residual $\abs{\mAik \xk - \vbik}/\norm{\mAik}$ exceeds some threshold \cite{bai2018greedy, bai2018relaxed}. RGRK has a faster convergence guarantee than RK but a slower guarantee than MDK and is significantly more expensive per-iteration than MDK \cite{bai2018relaxed,gower2021adaptive}. We compare the convergence guarantee of RGRK \cite{bai2018relaxed} with that of SSRK in \Cref{sec:bai-wu-compare}.

Nutini et al. consider several improvements to RK for sparse systems \cite{nutini2016convergence}.
In particular, this work introduces leveraging the \emph{orthogonality graph}, which is formed from by considering the Gramian matrix $\mG = \mA \mA^T$ as an adjacency matrix for an unweighted graph.
In this graph the nodes are the rows of the matrix.
Two nodes are joined by an edge if they are \emph{not orthogonal}.
The key observation is that a Kaczmarz update for row $\mA_i$ only affects residual entries corresponding to adjacent nodes in this graph.
This allows for tracking a so-called selectable set, which is the complement of the set of nodes for which the corresponding residual entry is known to be $0$.
Since sampling non-selectable rows yields no progress, one could hope to speed up RK by restricting the sampling to the selectable set.
We consider the selectable set method in more detail, and specialize to several common types of sparse orthogonality graphs.

\subsection{Contribution}
In this paper, we define a general framework for what we call \emph{Selectable Set Randomized Kaczmarz methods (SSRK,
\Cref{algo:selectablesetrk})}. This is a generalization of the orthogonality graph method proposed by Nutini et al. \cite{nutini2016convergence}. 
We show that SSRK methods converge linearly with a speedup over RK related to the size of the selectable set.
We define and analyze two specific SSRK methods, the Non-Repetitive Selectable Set Randomized Kaczmarz method (NSSRK, \Cref{algo:nssrk}) and the Gramian Selectable Set Randomized Kaczmarz method (GSSRK, \Cref{algo:gssrk}). These methods use different strategies to identify the selectable set. We show that NSSRK has a selectable set of size $m-1$, while the size of the GSSRK selectable set is bounded from below by properties of the matrix $\mA$. Finally, we note that the convergence guarantee of NSSRK is the same as that of Relaxed Greedy Randomized Kaczmarz method (RGRK) \cite{bai2018greedy, bai2018relaxed} despite converging much slower than RGRK in practice. This suggests that the convergence guarantee for RGRK is not tight.

\subsection{Organization}
The rest of this paper is structured as follows. The remainder of this section summarizes the notation that will be used throughout. 
In \Cref{sec:selectset} we define SSRK methods and define two specific examples, NSSRK and GSSRK.
Then, in \Cref{sec:convergence}, we prove a general convergence guarantee for SSRK methods, presented in \Cref{thm:selectable-one-step}, and use it to prove corollaries for specific methods and sampling strategies. Additionally, we discuss connections between the convergence analysis of \Cref{algo:nssrk} and a popular Kaczmarz method proposed by Bai and Wu \cite{bai2018greedy,bai2018relaxed}. Next, in \Cref{sec:lower-bd-gssrk}, we examine the improvement of applying \Cref{algo:gssrk} to problems with structured systems. Then, we show some empirical results in \Cref{sec:experiments} and finally, in \Cref{sec:conclusion}, we summarize our work and provide a short discussion on Kaczmarz sampling strategies. 

\subsection{Notation and Assumption}

We consider consistent systems of linear equations $\mA \vx = \vb$ with $\mA \in \R^{\nrows \times \ncols}$,  $\vx \in \R^{\ncols}$, and right hand side vector (RHS) $\vb \in \R^{\nrows}$.
We seek the least-norm solution to the system $\xopt=\mA^\dagger\vb$.
Throughout this paper, $m$ will globally represent the number of rows in the system and $n$ will represent the number of columns of the matrix $\mA$.  
Bold uppercase letters represent matrices, bold lowercase letters represent vectors, and standard letters represent scalars.
$\mAi$ denotes the $i\thup$ row of the matrix $\mA$, while $\vbi$ denotes the $i\thup$ element of the vector $\vb$.
We use $[m]$ as shorthand for the set $\{1, 2, \ldots, \nrows\}$.
The norm $\norm{\cdot}$ is the Euclidean vector norm and $\norm{\cdot}_F$ is the Fr\"obenius matrix norm.
The smallest \emph{nonzero} singular value of $\mA$ is $\sigma_{\min}(\mA)$.
The index of the equation chosen on iteration $k$ is $i_k\in[m]$. 
The matrix $\mA$ is presumed to have no rows of all zeros so that $\norm{\mAi}>0$ for all $i$ and the Kaczmarz update (\Cref{eqn:kacz-update}) is well defined for any $\ik$.

The initial iterate is denoted $\xinit$ and the iterate at iteration $k$ is denoted $\xk$. 
Likewise for SSRK methods, the initial selectable set is $\selectset_0$, often chosen as $\selectset_0 = [m]$, and subsequent selectable sets are denoted $\selectset_k$.
We define the complement of a selectable set as $\selectset^C=[m]\backslash\selectset$. 
In the analysis of the selectable set, we use the floor $\lfloor \ell \rfloor$ to denote the greatest integer less than or equal to $\ell$ and the ceiling $\lceil \ell \rceil$ to denote the smallest integer greater than or equal to $\ell$.
For ease of reference, in \Cref{tab:abrev} we list the acronyms for the methods that we investigate and analyze in this paper.

\begin{table}[ht]
\begin{center}
\begin{tabular}{lll} 
 \toprule
 Method acronym &  Method name & Reference(s) \\ \midrule
RK & Randomized Kaczmarz & Strohmer and Vershynin 2009 \cite{strohmer2009randomized} \\
MDK & Max-Distance Kaczmarz & Motzkin 1954 \cite{motzkin1954relaxation}\\
SSRK & Selectable Set Randomized Kaczmarz & \Cref{algo:selectablesetrk} \\
NSSRK & Non-Repetitive Selectable Set Randomized Kaczmarz & \Cref{algo:nssrk} \\
GSSRK & Gramian Selectable Set Randomized Kaczmarz & Nutini et al. 2016 \cite{nutini2016convergence} and \Cref{algo:gssrk} \\
GRK & Greedy Randomized Kaczmarz & Bai and Wu 2018 \cite{bai2018greedy} \\
RGRK & Relaxed Greedy Randomized Kaczmarz & Bai and Wu 2018 \cite{bai2018relaxed}\\
 \bottomrule
\end{tabular}
\caption{Acronyms of the methods discussed.}
\label{tab:abrev}
\end{center}
\end{table}

The scalars $p_1, p_2, \ldots, p_\nrows$ represent probabilities associated with each equation of the system or equivalently each row of $\mA$. We often refer to rows of $\mA$ and equations $\mAi \vx = \vbi$ interchangeably. We use $\Diag (\mathbf{v})$ to denote the square matrix whose diagonal entries take the values from the vector $\mathbf{v}$ and whose remaining entries are all $0$. In particular, we utilize the diagonal matrices of row norms $\mD = \Diag( \norm{\mA_1}, \norm{\mA_2}, \ldots, \norm{\mA_\nrows})$ and probabilities $\mP=\Diag(p_1, p_2, \ldots, p_\nrows)$.

The Gramian matrix of $\mA$ is $\mG=\mA \mA^T$ where $\mG \in \R^{m \times m}$. 
It has the property $G_{ij} = \langle \mA_i, \mA_j \rangle$ where $\langle \cdot, \cdot \rangle$ is the dot product between vectors. 
By the symmetry of the dot product, $\mG$ is symmetric with $G_{ij}=G_{ji}=0$ if and only if rows $\mA_i$ and $\mA_j$ are orthogonal. 
Thus, those entries with a nonzero value indicate that the corresponding rows of $\mA$ are non-orthogonal. 
From the Gramian matrix $\mG$, we derive a \emph{non-orthogonality graph} where each node represents a row of $\mA$ and a nonzero entry $\mG_{ij}$ is interpreted as an edge between nodes $i$ and $j$. 
We always assume that our graphs do not contain self edges and allow $G_{ii} \neq 0$ from here and thereafter.  
Since each node in the non-orthogonality graph represents a row in the matrix $\mA$, the number of nodes in the graph is $\nrows$. For this graph, we let $\indepset$ denote the maximum independent set, the largest set in which no pair of nodes share an edge. We denote the cardinality and complement of a set as $\abs{\indepset}$ and $\indepset^C=[m]\backslash\indepset$, respectively.




\section{Selectable set method}\label{sec:selectset}

Since the convergence behavior of the Kaczmarz method is highly dependent on the sampling strategy used to determine the order of projections, it is important to develop and analyze various sampling techniques. Here, we focus on the framework in which a \emph{selectable set} of equations is identified in each iteration and then an equation is selected from that set, typically at random.  Since the Kaczmarz update only improves the solution when selecting an equation that is not already solved, we aim to identify the selectable set that is precisely the set of equations not currently solved. 

\begin{definition} \label{def:selectset}
    A selectable set $\selectset_k\subset [m]$ for a Kaczmarz method, given a matrix $\mA$, vector $\vb$ and an iterate $\xk$, is a set of indices that satisfies $i \not \in \selectset_k \implies \mAi \xk = \vbi$.
\end{definition}

Based on this definition, if an equation $i$ is sampled from outside the selectable set, then $\mAi \xk = \vbi$, implying that if row $i$ were chosen for the Kaczmarz update, then $\xkpo = \xk$. Thus, sampling exclusively from the selectable set automatically guarantees faster convergence than that of a method that selects in the same random fashion from the entire set of equations\footnote{Note that this is related to, but fundamentally different from, random sampling without replacement. Sampling without replacement indeed guarantees that the same equation is not selected in consecutive iterations, but an equation solved in iteration $k$ need not be solved in even the next iteration. See \Cref{sec:future} for more discussion.}. 

In the Selectable Set Randomized Kaczmarz method (SSRK), the equation chosen at each iteration must be sampled from the current selectable set. 
We assume that a fixed probability distribution on the equations is given.
Then, instead of sampling $\ik$ according to the probabilities $p_1, p_2, \ldots, p_\nrows$ as in RK, SSRK samples $\ik$ conditioned on $\ik \in \selectset_k$.
This can be achieved by repeatedly sampling $\ik$ according to the probabilities $p_1, p_2, \ldots, p_\nrows$ until the condition $\ik \in \selectset_k$ is satisfied. \edit{This rejection sampling is mathematically equivalent to sampling from the explicit distribution $p_i / \sum_{j \in \selectset_k} p_j$ for $i \in \selectset_k$ and zero for $i \notin \selectset_k$ at each iteration. Explicitly updating the sampling distribution at each iteration is advantageous and yields computational improvement when the selectable set is small and explicitly known. Conversely, rejection sampling is advantageous if the selectable set contains a majority of the rows because it bypasses the computational overhead of recomputing the distribution.}

\begin{algorithm}[ht]
\begin{algorithmic}[1]
    \State{\textbf{Input}  Matrix $\mA$, RHS $\vb$, initial selectable set $\selectset_0$, initial iterate $\xinit\in\row(\mA)$, probabilities $p_1, p_2, \ldots, p_\nrows>0$}
    \For{$k=0, 1, \ldots$}
    \State Sample row $i_k$ according to probabilities $p_1, p_2, \ldots, p_\nrows$ with rejection until $i_k\in  \selectset_k$ \edit{\footnotemark}
    \State  Update $\xkpo = \xk -\frac{\mAik \xk - \vbik}{\normsq{\mAik}} \mA_{i_k}^T$
    \State Update $\selectset_{k+1}$ so that $i \not \in \selectset_{k+1} \implies \mAi \xkpo = \vbi$
    \Comment{See \Cref{algo:nssrk,algo:gssrk} for examples.}
    \EndFor
    \State{\textbf{Output} Approximate solution $\xk$}
    
\end{algorithmic}
\caption{Selectable Set Randomized Kaczmarz (SSRK)}\label{algo:selectablesetrk}
\end{algorithm}
\footnotetext{\edit{This rejection sampling strategy is equivalent to sampling from the distribution defined by $p_i / \sum_{j \in \selectset_k} p_j$ for $i \in \selectset_k$ and zero for $i \notin \selectset_k$, computing this distribution explicitly is more advantageous if the size of the selectable set is small.}}


\subsection{Non-repetitive selectable set}


A simple construction to update the selectable set $\selectset$ is to begin by including every index in the first selectable set $\selectset_0 = [\nrows]$.
Then, for each subsequent iteration, omit the most recently chosen index from the selectable set so that $\selectset_{k+1} = [m]\backslash \{\ik\}$. 
In this construction, $\abs{\selectset_0} = \nrows$ and $\abs{\selectset_k} = \nrows -1$ for $k > 0$. 
In order to save memory, there is no need to explicitly construct $\selectset_k$. 
It is sufficient to keep track of the previously sampled row which corresponds to $\selectset_k^C = \{i_{k-1}\}$.
Sampling with rejection from this selectable set has a probability $(\nrows-1)/\nrows$ of succeeding on each attempt, since $\abs{\selectset_k}=\nrows-1$.
The total number of attempts required to sample $\ik$ is thus geometrically distributed with mean $\nrows/(\nrows-1)$. We refer to this method as the Non-Repetitive Selectable Set method (NSSRK) \Cref{algo:nssrk}.

\begin{algorithm}[ht]
\begin{algorithmic}[1]
    \State{\textbf{Input}  Matrix $\mA$, RHS $\vb$, initial iterate $\xinit\in\row(\mA)$, probabilities $p_1, p_2, \ldots, p_\nrows>0$}
    \State $\selectset_0 = [\nrows ]$
    \For{$k=0, 1, \ldots$}
    \State Sample row $i_k$ according to probabilities $p_1, p_2, \ldots, p_\nrows$ with rejection until $i_k\in  \selectset_k$
    \State  Update $\xkpo = \xk -\frac{\mAik \xk - \vbik}{\normsq{\mAik}} \mAik^T$
    \State Set $\selectset_{k+1} = [m] \backslash \{i_k\}$
    \EndFor
    \State{\textbf{Output} Approximate solution $\xk$}
    
\end{algorithmic}
\caption{Non-Repetitive Selectable Set Randomized Kaczmarz (NSSRK)}\label{algo:nssrk}
\end{algorithm}

\FloatBarrier
\subsection{Gramian-Based Selectable Set}





A second method to update the selectable set, originally proposed in Nutini et al. is to leverage the Gramian $\mG = \mA \mA^T$ of the matrix $\mA$ \cite{nutini2016convergence, sepehry2016finding}. In many structured problems we have access to both the matrix $\mA$ and its Gramian $\mG$. 
One example of such a problem is graph semi-supervised learning \cite{bertozzi2018uncertainty}. 
The Gramian, by definition, has the property that $G_{ij} = \langle \mA_i, \mA_j \rangle$. That is, the $ij\thup$ entry of the Gramian is the inner product between the $i\thup$ and $j\thup$ rows of $\mA$.
So  $G_{ij} = 0$ if and only if rows $\mA_i$ and $\mA_j$ are orthogonal.
Based on \Cref{lem:Gramian-orthogonality}, stated and proven below, we will develop an update to the selectable set based on the Gramian. 

\begin{lemma} \label{lem:Gramian-orthogonality}
    If an equation $\mA_j \vx = b_j$ solved by the iterate $\xk$, and if $\mAik$ is orthogonal to $\mA_j$ (i.e. $G_{\ik j}=0$), then the equation is also solved by the next iterate $\xkpo$.
\end{lemma}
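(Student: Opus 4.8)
The plan is to compute directly how the residual entry for equation $j$ changes under the Kaczmarz update for row $i_k$. Recall the update \eqref{eqn:kacz-update}: $\xkpo = \xk - \frac{\mAik \xk - \vbik}{\normsq{\mAik}}\mAik^T$. First I would look at $\mA_j \xkpo - b_j$ and substitute this expression for $\xkpo$, obtaining
\begin{equation*}
    \mA_j \xkpo - b_j = \parens{\mA_j \xk - b_j} - \frac{\mAik \xk - \vbik}{\normsq{\mAik}}\,\mA_j \mAik^T.
\end{equation*}

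Next I would use the two hypotheses. Since $\xk$ solves equation $j$, the first term $\mA_j \xk - b_j$ vanishes. Since $\mAik$ is orthogonal to $\mA_j$, we have $\mA_j \mAik^T = \langle \mA_j, \mAik \rangle = G_{\ik j} = 0$, so the second term vanishes as well (the scalar coefficient $\frac{\mAik \xk - \vbik}{\normsq{\mAik}}$ is finite because $\norm{\mAik} > 0$ by the standing assumption that $\mA$ has no zero rows). Hence $\mA_j \xkpo - b_j = 0$, i.e.\ equation $j$ is solved by $\xkpo$, which is exactly the claim.

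This lemma is essentially a one-line computation, so there is no real obstacle; the only thing to be careful about is making explicit that the scalar multiplier is well-defined (guaranteed by $\norm{\mAik}>0$) and that the orthogonality hypothesis $G_{\ik j}=0$ is precisely the statement $\langle \mA_j, \mAik\rangle = 0$ that makes the rank-one correction term drop out. I would present it as the short display above followed by the observation that both terms on the right are zero.
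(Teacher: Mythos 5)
Your proof is correct and follows essentially the same route as the paper's: multiply the Kaczmarz update on the left by $\mA_j$, then use $\mA_j \xk = b_j$ to kill the first term and $\mA_j \mAik^T = G_{\ik j} = 0$ to kill the rank-one correction. The only cosmetic difference is that you work with the residual $\mA_j \xkpo - b_j$ directly, which is slightly tidier but mathematically identical.
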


\begin{proof}
    Let $\xk$ and $j$ satisfy $\mA_j \xk = b_j$, and suppose $\mAik$ is orthogonal to $\mA_j$.
    Multiplying by $\mA_j$ on the left of both sides of the Kaczmarz update (\Cref{eqn:kacz-update}) results in
    \begin{align*}
        \mA_j \xkpo & = \mA_j \left(\xk -\frac{\mAik x^k - \vbik}{\normsq{\mAik}} \mAik^T \right)\\
        & = \mA_j \xk -\mA_j \frac{\mAik x^k - \vbik}{\normsq{\mAik}} \mAik^T \\
        & =\mA_j \xk - \frac{\mAik x^k - \vbik}{\normsq{\mAik}} \mA_j \mAik^T \\
        &= \mA_j \xk - \frac{\mAik x^k - \vbik}{\normsq{\mAik}}  \langle \mAik, \mA_j \rangle.
    \end{align*}
    Using the assumption that $\mA_j \xk = b_j$, 
    \begin{equation*}
        \mA_j \xkpo 
        = b_j - \frac{\mAik x^k - \vbik}{\normsq{\mAik}}  \langle \mAik, \mA_j \rangle.
    \end{equation*}
    Finally, by the assumption that $\mAik$ is orthogonal to $\mA_j$, 
    \begin{align*}
    \mA_j \xkpo  &=  \vbj -\frac{\mAik x^k - \vbik}{\normsq{\mAik}} 0 \\&= \vbj.
    \end{align*}
\end{proof}

Recall that any equation $\mA_j \vx = b_j$ that is not selectable must be solved by the iterate $\xk$. Thus, from \Cref{lem:Gramian-orthogonality}, we know that if $j \not \in \selectset_k$ and if $\ik$ satisfies $G_{\ik j} = 0$, then the equation $\mA_j \vx = \vbj$ is still solved by the next iterate, i.e. $\mA_j \xkpo = \vbj$. 
This suggests that any unselectable index $j \not \in \selectset_k$ for which $G_{\ik j} = 0$ should remain unselectable on iteration $k+1$, since the corresponding equation is still solved.
The Gramian Selectable Set Randomized Kaczmarz method (GSSRK), \Cref{algo:gssrk}, is based on this observation. In GSSRK, only those indexes $j$ with $G_{\ik j} \neq 0$ are reintroduced to the selectable set at each iteration.


\begin{algorithm}[ht]
\begin{algorithmic}[1]

    \State \textbf{Input}  Matrix $\mA$, RHS $\vb$, Gramian $\mG := \mA \mA^T$, initial iterate $\xinit\in\row(\mA)$, probabilities $p_1, p_2, \ldots, p_\nrows>0$
    \State{$\selectset_0 = [m]$}
    \For{$k=0, 1, \ldots$}
    \State Sample row $i_k$ according to probabilities $p_1, p_2, \ldots, p_\nrows$ with rejection until $i_k\in  \selectset_k$
    \State  Update $\xkpo = \xk -\frac{\mAik \xk - \vbik}{\normsq{\mAik}} \mAik^T$
    \State $\selectset_{k+1} = (\selectset_{k} \cup \{j: G_{i_kj} \neq 0 \}) \backslash \{i_k\}$
    \EndFor
    \State{\textbf{Output} Approximate solution $\xk$}
    
\end{algorithmic}
\caption{Gramian Selectable Set Randomized Kaczmarz (GSSRK) \cite{nutini2016convergence}}\label{algo:gssrk}
\end{algorithm}


\section{Convergence analysis}
\label{sec:convergence}

Now, we turn to proving convergence results for \Cref{algo:selectablesetrk}. 
First, we prove a one-step convergence result for the general selectable set method with a fixed probability distribution. 
We analyze a single iteration of the general case with an arbitrary sampling distribution, and a known selectable set $S_k$. 
We then focus on specific probability distributions common in the literature \cite{strohmer2009randomized}, and prove an improvement in the convergence constant which is inversely proportional to the size of the selectable set.  This speedup is roughly what one should expect.  For example if rows are sampled uniformly, then RK wastes an \edit{$1 - |S_k|/m$} fraction of iterations on updates which make no progress, whereas SSRK avoids this.

\begin{theorem} \label{thm:selectable-one-step}
The iterates of Selectable Set Randomized Kaczmarz (\Cref{algo:selectablesetrk}) satisfy
\begin{equation*}
    \Ek \normsq{\xkpo - \xopt} \le  \left (1 - \frac{\sigma^2_{\min}(\mathbf{P}^\frac{1}{2}\mathbf{D}^{-1}\mA)}{\sum_{j \in \selectset_{k}} p_j } \right ) \normsq{\xk - \xopt},
\end{equation*}
where $\xopt$ is the least-norm solution, $\mathbf{P} = \Diag(p_1, p_2, \ldots, p_\nrows)$, and $\mathbf{D} = \Diag( \norm{\mA_1}, \norm{\mA_2}, \ldots, \norm{\mA_\nrows})$ when $\sum_{j \in \selectset_{k}} p_j  \neq 0$.
\end{theorem}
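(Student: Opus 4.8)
The plan is to follow the classical randomized Kaczmarz contraction argument, with the single new ingredient being the defining property of the selectable set from \Cref{def:selectset}.

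First I would exploit that the update in \Cref{eqn:kacz-update} is precisely the orthogonal projection of $\xk$ onto the hyperplane $\{\vx : \mAik\vx = \vbik\}$. Since the system is consistent, $\xopt$ lies in this hyperplane, and the displacement $\xk - \xkpo$ is parallel to $\mAik^T$ while $\xkpo - \xopt$ lies in $\{\vz:\mAik\vz=0\}$; these are orthogonal, so the Pythagorean theorem gives
\[
\normsq{\xkpo - \xopt} = \normsq{\xk - \xopt} - \normsq{\xkpo - \xk}.
\]
A direct computation using $\vbik = \mAik\xopt$ then yields $\normsq{\xkpo - \xk} = (\mAik(\xk-\xopt))^2/\normsq{\mAik}$.

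Next I would take the conditional expectation $\Ek$ over the choice of $i_k$. In SSRK the index is drawn from $\selectset_k$ with $\P(i_k = j) = p_j / \sum_{\ell\in\selectset_k}p_\ell$, so
\[
\Ek\normsq{\xkpo - \xopt} = \normsq{\xk-\xopt} - \frac{1}{\sum_{\ell\in\selectset_k}p_\ell}\sum_{j\in\selectset_k}\frac{p_j\,(\mA_j(\xk-\xopt))^2}{\normsq{\mA_j}}.
\]
The crucial observation is that \Cref{def:selectset} forces $\mA_j\xk = \vbj = \mA_j\xopt$, hence $\mA_j(\xk-\xopt)=0$, for every $j\notin\selectset_k$; therefore the summand vanishes off $\selectset_k$ and I may extend $\sum_{j\in\selectset_k}$ to $\sum_{j\in[m]}$ without changing the value. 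The resulting full sum is exactly $\normsq{\mP^{\frac12}\mD^{-1}\mA(\xk-\xopt)}$, since the $j$-th row of $\mP^{\frac12}\mD^{-1}\mA$ is $\sqrt{p_j}\,\mA_j/\norm{\mA_j}$. This rewriting (shrinking the normalization from $\sum_j p_j = 1$ to $\sum_{j\in\selectset_k}p_j$) is the entire source of the claimed speedup.

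Finally I would lower-bound $\normsq{\mP^{\frac12}\mD^{-1}\mA(\xk-\xopt)} \ge \sigma^2_{\min}(\mP^{\frac12}\mD^{-1}\mA)\normsq{\xk-\xopt}$, substitute, and collect terms to obtain the stated contraction factor. The one point that needs care — and the main (mild) obstacle — is justifying this singular-value step: it requires $\xk-\xopt$ to be orthogonal to the null space of $\mP^{\frac12}\mD^{-1}\mA$, which equals the null space of $\mA$ because $\mP^{\frac12}\mD^{-1}$ is an invertible diagonal matrix, i.e.\ it requires $\xk-\xopt\in\row(\mA)$. This holds by induction: $\xopt = \mA^\dagger\vb\in\row(\mA)$, $\xinit\in\row(\mA)$ by hypothesis, and each update adds a multiple of $\mAik^T\in\row(\mA)$. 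I would also note explicitly that the hypothesis $\sum_{j\in\selectset_k}p_j\neq 0$ is exactly what makes the conditional distribution (and the final bound) well defined; the rest of the argument is routine.
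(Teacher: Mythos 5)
Your proposal is correct and follows essentially the same route as the paper's proof: the orthogonal-projection error identity, taking the conditional expectation over $i_k \in \selectset_k$, extending the sum to all of $[m]$ via \Cref{def:selectset}, recognizing the sum as $\normsq{\mP^{\frac12}\mD^{-1}\mA(\xk-\xopt)}$, and bounding below by $\sigma^2_{\min}$ using $\xk-\xopt\in\row(\mA)=\row(\mP^{\frac12}\mD^{-1}\mA)$. The only difference is presentational: you spell out the Pythagorean derivation of the error identity and the inductive argument for $\xk\in\row(\mA)$ slightly more explicitly than the paper does.
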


\begin{proof}
From the update formula \Cref{eqn:kacz-update}, we derive the usual update for the squared error
\begin{equation*}
    \normsq{\xkpo - \xopt} = \normsq{\xk - \xopt} - \frac{\abssq{\mAik \xk - \vbik}}{\normsq{\mAik}}.
\end{equation*}
Letting $\Ek$ denote the expectation conditioned on $i_0, i_1, \ldots, i_{k-1}$, we take this conditional expectation on both sides
\begin{align*}
    \Ekb{\normsq{\xkpo - \xopt}} 
    &= \normsq{\xk - \xopt} - \Ekb{\frac{\abssq{\mAik \xk - \vbik}}{\normsq{\mAik}}} \\
    &= \normsq{\xk - \xopt} - \sum_{i \in \selectset_{k}} \frac{p_i}{\sum_{j \in \selectset_k} p_j} \frac{\abssq{\mAi \xk - \vbi}}{\normsq{\mAi}}.
\end{align*}
Pulling the normalizing constant $1/ \sum_{j \in \selectset_k} p_j$ out of the summation,
\begin{align*}
    \Ekb{\normsq{\xkpo - \xopt}} 
    &= \normsq{\xk - \xopt} - \frac{1}{\sum_{j \in \selectset_k} p_j}\sum_{i \in \selectset_{k}} p_i \frac{\abssq{\mAi \xk - \vbi}}{\normsq{\mAi}}.\\
\end{align*}


By the definition of the selectable set, we know that $i \not\in \selectset_{k} \implies \mAi \xk - \vbi = 0$ so we can extend our sum over all rows

\begin{align*}
    \Ekb{\normsq{\xkpo - \xopt}} 
    &= \normsq{\xk - \xopt} - \frac{1}{\sum_{j \in \selectset_k} p_j}\sum_{i \in [m]} p_i \frac{\abssq{\mAi \xk - \vbi}}{\normsq{\mAi}}.
\end{align*}
By definition of $\xopt$ we  can rewrite $\vbi = \mAi \xopt$

\begin{align*}
    \Ekb{\normsq{\xkpo - \xopt}} 
    &= \normsq{\xk - \xopt} - \frac{1}{\sum_{j \in \selectset_k} p_j}\sum_{i \in [m]} p_i \frac{\abssq{\mAi \xk - \mAi \xopt}}{\normsq{\mAi}} \\
    &= \normsq{\xk - \xopt} - \frac{1}{\sum_{j \in \selectset_k} p_j}\sum_{i \in [m]} p_i \frac{\abssq{\mAi( \xk - \xopt)}}{\normsq{\mAi}}\\
    &= \normsq{\xk - \xopt} - \frac{1}{\sum_{j \in \selectset_k} p_j}\sum_{i \in [m]} \abssq{\frac{p_i^{\frac{1}{2}}}{\norm{\mAi}} \mAi( \xk - \xopt)}.
\end{align*}

We now rewrite the summation as a norm of a matrix-vector multiplication using the previously defined $\mathbf{P}$ and $\mathbf{D}$ matrices: 

\begin{align*}
    \Ekb{\normsq{\xkpo - \xopt}} 
    &= \normsq{\xk - \xopt} - \frac{1}{\sum_{j \in \selectset_k} p_j} \normsq{ \mathbf{P}^{\frac{1}{2}} \mathbf{D}^{-1} \mA (\xk - \xopt)}.
\end{align*}




Now, we establish the lower bound $\normsq{ \mathbf{P}^{\frac{1}{2}} \mathbf{D}^{-1} \mA (\xk - \xopt)} \geq \sigma^2_{\min}(\mathbf{P}^{\frac{1}{2}} \mathbf{D}^{-1} \mA) \normsq{\xk - \xopt}$, where $\sigma_{\min}$ is the smallest \emph{nonzero} singular value based on the proof technique in Zouzias and Freris \cite{zouzias2013randomized}.
By the assumption that $\mA$ has no zero rows, $\mD$ is symmetric positive definite (SPD). Likewise, since the probabilities $p_1, p_2, \ldots, p_\nrows$ are positive, $\mP$ is SPD.
Since $\mD$ and $\mP$ are SPD, so are $\mathbf{P}^{\frac{1}{2}}$ and $\mathbf{D}^{-1}$.
Thus, $\row(\mathbf{P}^{\frac{1}{2}} \mathbf{D}^{-1} \mA)$ is equal to $\row(\mA)$. The vector $\xopt$ belongs to $\row(\mA)$ because it is the least-norm solution to $\mA\vx=\vb$. Additionally, the iterate $\xk$ belongs to $\row(\mA)$ because the initial iterate $\xinit$ belongs to $\row(\mA)$ as does the direction of the Kaczmarz update at each iteration. Since the iterate $\xk$ and the least-norm solution $\xopt$ both belong to $\row(\mA)$, so does their difference $\xk-\xopt$. Finally, recalling that $\row(\mA)=\row(\mathbf{P}^{\frac{1}{2}} \mathbf{D}^{-1} \mA)$, we see that $\xk-\xopt \in \row(\mathbf{P}^{\frac{1}{2}} \mathbf{D}^{-1} \mA)$ and thus $\normsq{ \mathbf{P}^{\frac{1}{2}} \mathbf{D}^{-1} \mA (\xk - \xopt)} \geq \sigma^2_{\min}(\mathbf{P}^{\frac{1}{2}} \mathbf{D}^{-1} \mA) \normsq{\xk - \xopt}$. Applying this lower bound, we arrive at the desired result
\begin{align*}
    \Ekb{\normsq{\xkpo - \xopt}} 
    &= \normsq{\xk - \xopt} - \frac{1}{\sum_{j \in \selectset_k} p_j} \normsq{ \mathbf{P}^{\frac{1}{2}} \mathbf{D}^{-1} \mA (\xk - \xopt)} \\
    & \leq \normsq{\xk - \xopt} - \frac{1}{\sum_{j \in \selectset_k} p_j} \sigma^2_{\min}(\mathbf{P}^{\frac{1}{2}} \mathbf{D}^{-1} \mA) \normsq{ \xk - \xopt} \\
    & =  \left (1 - \frac{\sigma^2_{\min}(\mathbf{P}^{\frac{1}{2}} \mathbf{D}^{-1} \mA)}{\sum_{j \in \selectset_{k}} p_j } \right ) \normsq{\xk - \xopt}. 
\end{align*}

\end{proof} 

\subsection{Corollaries}


We now take a closer at how \Cref{thm:selectable-one-step} applies to \Cref{algo:selectablesetrk,algo:gssrk,algo:nssrk} for two specific common choices of the probabilities $p_1, p_2, \ldots, p_\nrows$. 
In particular, we analyze uniform probabilities $p_i=1/m$ in \Cref{cor:gramian-uniform} and squared row norm probabilities  $p_i=\normsq{\mAi}/\normsq{\mA}_F$ in \Cref{cor:gramian-rownorm}.
For either choice of probabilities, the convergence guarantees vary depending on the selectable set $\selectset_k$ at each iteration.

\begin{corollary} 
\label{cor:gramian-uniform}
    When using uniform probabilities $p_i = 1/ \nrows$, the iterates of \Cref{algo:selectablesetrk} satisfy
    \begin{equation*}
        \Ek \normsq{\xkpo - \xopt} \le  \left (1 - \frac{\sigma^2_{\min}(\mD^{-1} \mA)}{\abs{ \selectset_{k}}} \right ) \normsq{\xk - \xopt},
    \end{equation*}
    where $\mathbf{D} = \Diag( \norm{A_1}, \norm{A_2}, \ldots, \norm{A_m})$.
\end{corollary}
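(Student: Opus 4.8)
The plan is to obtain this corollary as a direct specialization of \Cref{thm:selectable-one-step} with the substitution $p_i = 1/\nrows$ for all $i \in [m]$. First I would observe that under uniform probabilities the diagonal matrix $\mathbf{P} = \Diag(p_1,\ldots,p_\nrows)$ becomes $\frac{1}{\nrows}\mathbf{I}$, so that $\mathbf{P}^{1/2} = \frac{1}{\sqrt{\nrows}}\mathbf{I}$ and hence $\mathbf{P}^{1/2}\mathbf{D}^{-1}\mA = \frac{1}{\sqrt{\nrows}}\,\mathbf{D}^{-1}\mA$. The key (elementary) fact to invoke is that multiplying a matrix by a positive scalar $c$ multiplies every singular value by $c$; applying this with $c = 1/\sqrt{\nrows}$ gives $\sigma^2_{\min}(\mathbf{P}^{1/2}\mathbf{D}^{-1}\mA) = \frac{1}{\nrows}\,\sigma^2_{\min}(\mathbf{D}^{-1}\mA)$.

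Next I would handle the denominator: $\sum_{j \in \selectset_k} p_j = \sum_{j \in \selectset_k} \frac{1}{\nrows} = \frac{\abs{\selectset_k}}{\nrows}$. Substituting both expressions into the bound of \Cref{thm:selectable-one-step}, the factors of $\nrows$ cancel:
\[
\frac{\sigma^2_{\min}(\mathbf{P}^{1/2}\mathbf{D}^{-1}\mA)}{\sum_{j \in \selectset_k} p_j} = \frac{\frac{1}{\nrows}\sigma^2_{\min}(\mathbf{D}^{-1}\mA)}{\frac{\abs{\selectset_k}}{\nrows}} = \frac{\sigma^2_{\min}(\mathbf{D}^{-1}\mA)}{\abs{\selectset_k}},
\]
which yields exactly the claimed inequality. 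I would also note briefly that the hypothesis $\sum_{j \in \selectset_k} p_j \neq 0$ required by \Cref{thm:selectable-one-step} is automatic here, since $\selectset_k$ is nonempty (the algorithm samples an index from it) and each $p_j = 1/\nrows > 0$.

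There is essentially no main obstacle: the entire argument is a substitution plus the scalar-homogeneity of singular values. If anything, the only point warranting a sentence of care is the identity $\sigma_{\min}(cM) = c\,\sigma_{\min}(M)$ for $c>0$ — which follows immediately from the singular value decomposition (scaling $M$ scales its singular values) and the fact that the "smallest nonzero" singular value is preserved in index by a positive rescaling. Everything else is routine algebra, so the proof can be kept to a few lines.
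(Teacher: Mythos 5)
Your proposal is correct and follows exactly the paper's route: the paper's own proof is the one-line instruction ``Apply \Cref{thm:selectable-one-step} with $p_i = 1/\nrows$,'' and your write-up simply makes explicit the cancellation of the factors of $\nrows$ that this substitution entails. The added remarks on the scalar-homogeneity of singular values and the nonvanishing of $\sum_{j \in \selectset_k} p_j$ are accurate but were left implicit in the paper.
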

\begin{proof}
Apply \Cref{thm:selectable-one-step} with $p_i=\frac{1}{m}$.

\end{proof}

\Cref{cor:gramian-uniform} shows that \Cref{algo:selectablesetrk} with uniform probabilities $p_i = 1/ \nrows$ acheives a convergence guarantee that depends on the number of selectable rows $\abs{\selectset_k}$. 
The fewer the number of selectable rows, the faster the convergence guarantee.
If nearly all rows are selectable at every iteration, \Cref{cor:gramian-uniform} recovers the known convergence guarantee $\left (1 - \sigma^2_{\min}(\mD^{-1} \mA)/\nrows \right )$ for RK with uniform probabilities \cite{nutini2016convergence}. 
In \Cref{algo:nssrk}, all but one row are selectable at each iteration, so
the convergence guarantee is trivially slightly faster than that of RK as shown in \Cref{cor:n-uniform}.

\begin{corollary}\label{cor:n-uniform}
    When using uniform probabilities $p_i = \frac{1}{\nrows}$, the iterates of \Cref{algo:nssrk} satisfy
    \begin{align*}
        \Einit \normsq{\xone - \xopt} &\le \left (1 - \frac{\sigma^2_{\min}(\mD^{-1} \mA)}{\nrows} \right ) \normsq{\xinit - \xopt} \\
        \text{and} \qquad \Ek \normsq{\xkpo - \xopt} &\le \left (1 - \frac{\sigma^2_{\min}(\mD^{-1} \mA)}{\nrows - 1} \right ) \normsq{\xk - \xopt} \qqtext{for} k \ge 1,
    \end{align*}
    where $\mathbf{D} = \Diag( \norm{A_1}, \norm{A_2}, \ldots, \norm{A_\nrows})$. 
\end{corollary}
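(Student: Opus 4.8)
\textbf{Proof proposal for \Cref{cor:n-uniform}.}
The plan is to obtain this as an immediate specialization of \Cref{cor:gramian-uniform}, which in turn is a specialization of \Cref{thm:selectable-one-step}. First I would check that \Cref{algo:nssrk} is genuinely an instance of the SSRK framework of \Cref{algo:selectablesetrk}: the only thing to verify is that the update rule $\selectset_{k+1} = [m]\backslash\{i_k\}$ produces a valid selectable set in the sense of \Cref{def:selectset}. This holds because the only index excluded from $\selectset_{k+1}$ is $i_k$, and the Kaczmarz update \Cref{eqn:kacz-update} applied to equation $i_k$ makes $\xkpo$ satisfy that equation exactly, i.e. $\mAik\xkpo = \vbik$. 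Hence $i\notin\selectset_{k+1}\implies \mAi\xkpo = \vbi$, as required.

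Next I would read off the cardinalities of the selectable sets along a run of NSSRK. Since the algorithm initializes $\selectset_0 = [m]$, we have $\abs{\selectset_0} = m$, so applying \Cref{cor:gramian-uniform} at iteration $k=0$ gives exactly the first displayed inequality, with denominator $m$. For $k\ge 1$, the set $\selectset_k = [m]\backslash\{i_{k-1}\}$ is random (it depends on the previously sampled index $i_{k-1}$), but its \emph{cardinality} is the deterministic value $\abs{\selectset_k} = m-1$ regardless of which index was removed. Therefore \Cref{cor:gramian-uniform}, which conditions on $i_0,\dots,i_{k-1}$ and hence ``sees'' the realized $\selectset_k$, yields the bound with denominator $m-1$ on that event, and since this holds for every realization of $i_{k-1}$ the second displayed inequality follows.

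I do not anticipate a real obstacle here; the only point that deserves a sentence of care is precisely the one just mentioned — that the conditional expectation $\Ek$ is taken given $i_0,\dots,i_{k-1}$, so the random set $\selectset_k$ is already determined under that conditioning, and its size is $m-1$ deterministically for $k\ge 1$, so no further averaging over $\selectset_k$ is needed. With that observation in place, the corollary is just \Cref{cor:gramian-uniform} evaluated at $\abs{\selectset_0}=m$ and $\abs{\selectset_k}=m-1$.
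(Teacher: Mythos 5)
Your proposal is correct and follows exactly the paper's route: the paper's own proof is the one-line substitution of $\abs{\selectset_0}=\nrows$ and $\abs{\selectset_k}=\nrows-1$ for $k\ge 1$ into \Cref{cor:gramian-uniform}. The extra care you take in checking that $\selectset_{k+1}=[m]\backslash\{i_k\}$ is a valid selectable set and that $\abs{\selectset_k}$ is deterministic under the conditioning of $\Ek$ is sound but left implicit in the paper.
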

\begin{proof}
    Substitute $\abs{\selectset_0}=m$ and $\abs{\selectset_k}=m-1$ for $k\ge 1$ in \Cref{cor:gramian-uniform}.
\end{proof}

As discussed, using uniform probabilities results in a simple relationship between the number of selectable rows and the convergence guarantee at each iteration.
In contrast, using squared row norm probabilities $p_i = \normsq{\mA_i}/\normsq{\mA}_F$ results in a slightly more complicated relationship between the selectable set $\selectset_k$ and the convergence guarantee.
This relationship is shown in \Cref{cor:gramian-rownorm}.

\begin{corollary} \label{cor:gramian-rownorm}
    When using probabilities proportional to the squared row norms $p_i = \normsq{\mA_i} / \normsq{\mA}_F$, the iterates of \Cref{algo:selectablesetrk} satisfy
    \begin{equation*}
    \Ek \normsq{\xkpo - \xopt} \le  \left (1 - \frac{\sigma^2_{\min}(\mA)}{\sum_{j \in \selectset_k } \normsq{\mA_j} } \right ) \normsq{\xk - \xopt}.
    \end{equation*} 
\end{corollary}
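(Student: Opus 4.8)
The plan is to derive this as a direct corollary of \Cref{thm:selectable-one-step} by substituting the squared-row-norm probabilities and simplifying the two factors in the contraction constant. First I would set $p_i = \normsq{\mA_i}/\normsq{\mA}_F$ and observe that the denominator becomes $\sum_{j\in\selectset_k} p_j = \sum_{j\in\selectset_k}\normsq{\mA_j}/\normsq{\mA}_F$, so the $\normsq{\mA}_F$ factor will need to be cancelled against whatever shows up in the numerator.

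Next I would analyze the numerator $\sigma^2_{\min}(\mathbf{P}^{1/2}\mathbf{D}^{-1}\mA)$. With this choice of probabilities, the diagonal matrix $\mathbf{P}^{1/2}$ has entries $\norm{\mA_i}/\norm{\mA}_F$, and $\mathbf{D}^{-1}$ has entries $1/\norm{\mA_i}$, so the product $\mathbf{P}^{1/2}\mathbf{D}^{-1}$ is simply $\frac{1}{\norm{\mA}_F}\mathbf{I}$. Hence $\mathbf{P}^{1/2}\mathbf{D}^{-1}\mA = \frac{1}{\norm{\mA}_F}\mA$, and since scaling a matrix by a positive scalar scales all its singular values by that scalar, $\sigma^2_{\min}(\mathbf{P}^{1/2}\mathbf{D}^{-1}\mA) = \sigma^2_{\min}(\mA)/\normsq{\mA}_F$. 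Substituting both pieces into the bound from \Cref{thm:selectable-one-step} gives
\begin{equation*}
\Ek \normsq{\xkpo - \xopt} \le \left(1 - \frac{\sigma^2_{\min}(\mA)/\normsq{\mA}_F}{\sum_{j\in\selectset_k}\normsq{\mA_j}/\normsq{\mA}_F}\right)\normsq{\xk - \xopt},
\end{equation*}
and the $\normsq{\mA}_F$ factors cancel to yield the claimed expression.

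There is essentially no obstacle here; the only thing to be careful about is the cancellation of $\mathbf{P}^{1/2}\mathbf{D}^{-1}$ to a scalar multiple of the identity, which relies on the precise form of the row-norm probabilities, and noting that the hypothesis $\sum_{j\in\selectset_k} p_j \ne 0$ of \Cref{thm:selectable-one-step} holds automatically since $\selectset_k$ is nonempty whenever the iteration proceeds and the row norms are positive. I would also remark briefly that when all rows are selectable this recovers the classical Strohmer–Vershynin rate $1 - \sigma^2_{\min}(\mA)/\normsq{\mA}_F$.
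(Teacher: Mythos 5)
Your proposal is correct and follows essentially the same route as the paper's own proof: compute $\mathbf{P}^{1/2}\mathbf{D}^{-1} = \frac{1}{\norm{\mA}_F}\mathbf{I}$, substitute into \Cref{thm:selectable-one-step}, and cancel the $\normsq{\mA}_F$ factors. The additional remarks about the nonvanishing of $\sum_{j\in\selectset_k}p_j$ and the recovery of the Strohmer--Vershynin rate are consistent with the surrounding discussion in the paper.
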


\begin{proof}

    When $p_i = \normsq{\mA_i} / \normsq{\mA}_F$, we have
    \begin{equation*}
        \mP^\frac{1}{2}\mD^{-1} 
        = \Diag\parens{
        \frac{\norm{\mA_1}}{\norm{\mA}_F},
        \frac{\norm{\mA_2}}{\norm{\mA}_F}, 
        \ldots, 
        \frac{\norm{\mA_\nrows}}{\norm{\mA}_F}} \Diag\parens{\frac{1}{\norm{\mA_1}},\frac{1}{\norm{\mA_2}}, \ldots, \frac{1}{\norm{\mA_\nrows}}} = \frac{1}{\norm{\mA}_F} \mI.
    \end{equation*}
    Substituting this into \Cref{thm:selectable-one-step} along with the probabilities $p_i = \normsq{\mA_i} / \normsq{\mA}_F$, we find
    \begin{align*}
        \Ek \normsq{\xkpo - \xopt} & \le \left (1 - \frac{\sigma^2_{\min}(\frac{1}{\norm{\mA}_F}\mA)}{\sum_{j \in \selectset_{k}} \frac{\normsq{\mA_j}}{\normsq{\mA}_F}} \right ) \normsq{\xk - \xopt} \\
        & =  \left (1 -  \frac{\frac{1}{\normsq{\mA}_F}\sigma^2_{\min}(\mA)}{\frac{1}{\normsq{\mA}_F} \sum_{j \in \selectset_{k}} \normsq{\mA_j}} \right ) \normsq{\xk - \xopt}\\
        & = \left (1 - \frac{\sigma^2_{\min}(\mA)}{\sum_{j \in \selectset_k} \normsq{\mA_j} } \right ) \normsq{\xk - \xopt}.
    \end{align*}
\end{proof}

\Cref{cor:gramian-rownorm} shows that \Cref{algo:selectablesetrk} with squared row norm probabilities $p_i = \normsq{\mA_i} / \normsq{\mA}_F$ acheives a congergence guarantee that depends on the quantity $\sum_{j \in \selectset_k} \normsq{\mA_j}$.
This quantity is the squared Fr\"obenius norm of the row-submatrix of $\mA$ composed of those rows that are selectable.
When all of the rows of $\mA$ have roughly the same norm, \Cref{cor:gramian-rownorm} suggests that \Cref{algo:selectablesetrk} converges faster when fewer rows are selectable.
When the rows have very different norms, the relationship between the selectable set and the convergence guarantee is not so simple.

When nearly all rows are selectable at every iteration, 
\Cref{cor:gramian-rownorm} recovers the known convergence guarantee $\left (1 - \sigma^2_{\min}(\mA) / \normsq{\mA}_F \right )$ for RK with squared row norm probabilities $p_i = \normsq{\mA_i} / \normsq{\mA}_F$ \cite{strohmer2009randomized}.
In \Cref{algo:nssrk}, all but one row are selectable at each iteration, so the convergence guarantee is expectedly similar to that of RK as shown in \Cref{cor:n-rownorm}.

\begin{corollary} \label{cor:n-rownorm}
    When using probabilities proportional to the squared row norms $p_i = \normsq{\mA_i} / \normsq{\mA}_F$, the iterates of \Cref{algo:nssrk} satisfy
    \begin{align*}
        \Einit \normsq{\xone - \xopt} 
        &\le \left (1 - \frac{\sigma^2_{\min}(\mA)}{\normsq{\mA}_F }\right ) \normsq{\xinit - \xopt} \\
        \text{and} \qquad \Ek \normsq{\xkpo - \xopt} 
        &\le \left (1 - \frac{\sigma^2_{\min}(\mA)}{\normsq{\mA}_F - \normsq{\mA_{i_{k-1}}}}\right ) \normsq{\xk - \xopt} \qqtext{for} k \ge 1.
    \end{align*}
\end{corollary}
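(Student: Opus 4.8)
The plan is to specialize Corollary~\ref{cor:gramian-rownorm} to the selectable sets generated by Algorithm~\ref{algo:nssrk}. Recall that NSSRK initializes $\selectset_0 = [\nrows]$ and on every subsequent iteration sets $\selectset_{k+1} = [m] \backslash \{i_k\}$, so that $\selectset_k = [m] \backslash \{i_{k-1}\}$ for $k \ge 1$. Hence the quantity $\sum_{j \in \selectset_k} \normsq{\mA_j}$ appearing in Corollary~\ref{cor:gramian-rownorm} equals $\normsq{\mA}_F$ when $k = 0$ and equals $\normsq{\mA}_F - \normsq{\mA_{i_{k-1}}}$ when $k \ge 1$; the latter holds because $\sum_{j \in [m]} \normsq{\mA_j} = \normsq{\mA}_F$ by the definition of the Fr\"obenius norm, and exactly the single row $i_{k-1}$ has been removed from $[m]$.

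First I would substitute the $k = 0$ value into Corollary~\ref{cor:gramian-rownorm}, which gives
\begin{equation*}
\Einit \normsq{\xone - \xopt} \le \left(1 - \frac{\sigma^2_{\min}(\mA)}{\normsq{\mA}_F}\right) \normsq{\xinit - \xopt},
\end{equation*}
and then substitute the $k \ge 1$ value, which gives the second inequality with denominator $\normsq{\mA}_F - \normsq{\mA_{i_{k-1}}}$. The one point that deserves care is the conditioning: for $k \ge 1$ the bound depends on the index $i_{k-1}$, which is permissible precisely because $\Ek$ denotes expectation conditioned on $i_0, i_1, \ldots, i_{k-1}$, so $i_{k-1}$ and therefore $\selectset_k$ are deterministic under that conditioning — exactly the regime in which Theorem~\ref{thm:selectable-one-step} and Corollary~\ref{cor:gramian-rownorm} are stated.

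I do not expect a genuine obstacle here; the corollary is essentially a direct substitution. The only thing to verify before invoking Corollary~\ref{cor:gramian-rownorm} is that the sets produced by Algorithm~\ref{algo:nssrk} are valid selectable sets in the sense of Definition~\ref{def:selectset}, i.e. that $i \notin \selectset_{k+1}$ forces $i = i_k$ and hence $\mAik \xkpo = \vbik$. This is immediate from the Kaczmarz update~\Cref{eqn:kacz-update}, since $\xkpo$ is by construction the projection of $\xk$ onto the solution set of equation $i_k$, so that equation is solved by $\xkpo$. With this in place, Corollary~\ref{cor:gramian-rownorm} applies verbatim and the claimed bounds follow.
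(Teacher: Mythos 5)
Your proposal is correct and follows essentially the same route as the paper's own proof: both simply substitute $\selectset_0=[m]$ and $\selectset_k=[m]\backslash\{i_{k-1}\}$ into \Cref{cor:gramian-rownorm} and observe that $\sum_{j\in\selectset_k}\normsq{\mA_j}$ becomes $\normsq{\mA}_F$ or $\normsq{\mA}_F-\normsq{\mA_{i_{k-1}}}$ accordingly. Your added remarks on the conditioning in $\Ek$ and on verifying that the NSSRK sets satisfy \Cref{def:selectset} are correct but not part of the paper's (shorter) argument.
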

\begin{proof}
    For iteration $k=0$, substitute $\selectset_0=[m]$ in \Cref{cor:gramian-rownorm}.
    For iterations $k\ge 1$, substituting $\selectset_{k} = [m]\backslash \{i_{k-1}\}$ in \Cref{cor:gramian-rownorm} shows the desired result
    \begin{align*}
        \Ek \normsq{\xkpo - \xopt}  & \le \left (1 - \frac{\sigma^2_{\min}(\mA)}{\sum_{j \in [m]\backslash \{i_{k-1}\}} \normsq{\mA_j} } \right ) \normsq{\xk - \xopt} \\
        & = \left (1 - \frac{\sigma^2_{\min}(\mA)}{\sum_{j \in [m]} \normsq{\mA_j} - \normsq{\mA_{i_{k-1}}}} \right ) \normsq{\xk - \xopt} \\
        & = \left (1 - \frac{\sigma^2_{\min}(\mA)}{\normsq{\mA}_F - \normsq{\mA_{i_{k-1}}}} \right ) \normsq{\xk - \xopt}.
    \end{align*}
\end{proof}


From \Cref{cor:n-rownorm}, we see that NSSRK (\Cref{algo:nssrk}) is expected to converge faster on iterations when some row with large norm is not selectable.  This is essentially an artifact of the sampling scheme. Rows with large norms are chosen disproportionately often; when such a row is not selectable, the sampling is more uniform, and convergence improves. 

Next we compare the convergence analysis of NSSRK with convergence guarantee from the popular Kaczmarz method proposed in Bai and Wu \cite{bai2018relaxed}. While the NSSRK method is relatively simple and does not yield large improvement over non-selectable set methods, the convergence guarantee corresponds to the same convergence guarantee as in \cite{bai2018relaxed}. This leads us to believe that there may be a theory gap in Kaczmarz convergence analysis.

\subsection{Comparison with Relaxed Greedy Randomized Kaczmarz (RGRK) theory} \label{sec:bai-wu-compare}

The Relaxed Greedy Randomized Kaczmarz method (RGRK)\cite{bai2018relaxed} is similar to MDK in that both methods use sampling strategies that are biased toward rows with larger normalized residuals \edit{$\abs{\mAi\xk-\vbi}/\norm{\mAi}$}. 
In particular, RGRK considers only rows that satisfy
\begin{equation}\label{eqn:rgrk-thresh}
    \frac{\abssq{\mAi\xk-\vbi}}{\normsq{\mAi}} 
    \ge 
    \theta\max_{j \in [\nrows]}\parens{\frac{\edit{\abssq{\mA_j \xk - \vbj}}}{\normsq{\mA_j}}} +\parens{1-\theta}\frac{\normsq{\mA \xk - \vb}}{\normsq{\mA}_F}
    \qquad
    \text{for some }
    \theta\in[0,1]
\end{equation}
and samples the row $\mAik$ with probability proportional to the squared residual $\abssq{\mAi\xk-\vbi}$ from among such rows.
RGRK satisfies the convergence result\cite{bai2018relaxed}
\begin{equation}\label{eqn:bai-wu-rate}
    \Ek \normsq{\xkpo - \xopt} \le  \parens{1 -  \parens{\theta\frac{\normsq{\mA}_F}{\gamma}  + (1-\theta)} \frac{\sigma^2_{\min}(\mA)}{\normsq{\mA}_F}} \normsq{\xk - \xopt}
    \qqtext{for} k \ge 1,
\end{equation}
where 
\begin{equation}
\label{const:gamma}
    \gamma = \max_{i \in [\nrows]} \sum_{j = 1, j \neq i }^{\nrows} \normsq{\mA_j} = \normsq{\mA}_F - \min_{i \in [\nrows]}\normsq{\mAi}.
\end{equation}
This convergence result is optimized by the parameter $\theta = 1$ for which RGRK is equivalent to MDK. With $\theta=1$, \Cref{eqn:bai-wu-rate} simplifies to
\begin{equation}\label{eqn:bai-wu-best-case}
    \Ek \normsq{\xkpo - \xopt} \le \left (1 -  \frac{\sigma^2_{\min}(\mA)}{\gamma} \right ) \normsq{\xk - \xopt}\qqtext{for} k \ge 1.
\end{equation}
Coincidentally, NSSRK satisfies \Cref{eqn:bai-wu-best-case} for squared row norm probabilities.

\begin{corollary}
    When using probabilities proportional to the squared row norms $p_i = \normsq{\mA_i}/\normsq{\mA}_F$, the iterates of NSSRK (\Cref{algo:nssrk}) satisfy \Cref{eqn:bai-wu-best-case}.
\end{corollary}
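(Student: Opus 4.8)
The plan is to obtain this directly from \Cref{cor:n-rownorm}, which already handles \Cref{algo:nssrk} with squared row norm probabilities: for every $k \ge 1$ it gives
\begin{equation*}
    \Ek \normsq{\xkpo - \xopt} \le \left(1 - \frac{\sigma^2_{\min}(\mA)}{\normsq{\mA}_F - \normsq{\mA_{i_{k-1}}}}\right) \normsq{\xk - \xopt}.
\end{equation*}
So the entire task reduces to comparing the denominator $\normsq{\mA}_F - \normsq{\mA_{i_{k-1}}}$ appearing here with the quantity $\gamma = \normsq{\mA}_F - \min_{i \in [\nrows]}\normsq{\mAi}$ from \Cref{eqn:bai-wu-best-case}. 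There is no need to revisit the one-step analysis at all; I would simply cite the corollary and then run a monotonicity argument on the right-hand side.

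First I would note that whatever row index $i_{k-1}$ was sampled on the preceding iteration, one trivially has $\normsq{\mA_{i_{k-1}}} \ge \min_{i \in [\nrows]}\normsq{\mAi}$, and therefore
\begin{equation*}
    \normsq{\mA}_F - \normsq{\mA_{i_{k-1}}} \le \normsq{\mA}_F - \min_{i \in [\nrows]}\normsq{\mAi} = \gamma.
\end{equation*}
Since $\sigma^2_{\min}(\mA) \ge 0$ and the denominators are positive, shrinking a positive denominator only increases the fraction, so
\begin{equation*}
    1 - \frac{\sigma^2_{\min}(\mA)}{\normsq{\mA}_F - \normsq{\mA_{i_{k-1}}}} \le 1 - \frac{\sigma^2_{\min}(\mA)}{\gamma}.
\end{equation*}
Chaining this inequality with the bound from \Cref{cor:n-rownorm} yields exactly \Cref{eqn:bai-wu-best-case} for all $k \ge 1$.

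The argument is essentially a single monotonicity step, so I do not anticipate a genuine obstacle; the only point that deserves an explicit sentence is that $\gamma > 0$ (equivalently that the denominator $\normsq{\mA}_F - \normsq{\mA_{i_{k-1}}}$ is nonzero), which holds whenever $\nrows \ge 2$ and $\mA$ has no zero rows — precisely the setting in which \Cref{cor:n-rownorm} was already invoked. It is worth closing with the observation motivating this corollary: the NSSRK guarantee here coincides with the optimal ($\theta = 1$, MDK-equivalent) RGRK rate of \Cref{eqn:bai-wu-best-case}, even though NSSRK is far cheaper per iteration and, empirically, converges far more slowly, which supports the paper's claim that the RGRK convergence guarantee is not tight.
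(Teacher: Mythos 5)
Your proposal is correct and follows exactly the paper's own argument: cite \Cref{cor:n-rownorm} and then observe that $\normsq{\mA_{i_{k-1}}} \ge \min_{i\in[\nrows]}\normsq{\mAi}$ so the contraction factor is bounded by $1 - \sigma^2_{\min}(\mA)/\gamma$. The extra remark about positivity of the denominator is a harmless (and reasonable) addition that the paper leaves implicit.
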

\begin{proof}
    By \Cref{cor:n-rownorm}, we have the desired result
    \begin{align*}
        \Ek \normsq{\xkpo - \xopt} 
        &\le 
        \left (1 - \frac{\sigma^2_{\min}(\mA)}{\normsq{\mA}_F - \normsq{\mA_{i_{k-1}}}}\right ) \normsq{\xk - \xopt}
        \\ &\le 
        \left (1 - \frac{\sigma^2_{\min}(\mA)}{\normsq{\mA}_F - \min_{i \in [\nrows]}\normsq{\mAi}}\right ) \normsq{\xk - \xopt}
        \\ &=
        \left (1 - \frac{\sigma^2_{\min}(\mA)}{\gamma}\right ) \normsq{\xk - \xopt}.
    \end{align*}
\end{proof}

Since NSSRK satisfies \Cref{eqn:bai-wu-best-case} while RGRK only satisfies \Cref{eqn:bai-wu-rate}, one might incorrectly assume that NSSRK will outperform RGRK.
However, as we observe in \Cref{sec:experiments}, this is not the case.
RGRK significantly outperforms NSSRK and GSSRK, neglecting CPU time.
This discrepancy between convergence results and observed performance suggests that \Cref{eqn:bai-wu-rate} is not the tightest possible convergence result for RGRK.
Indeed, RGRK was recently shown to satisfy the tighter convergence result \cite{gower2021adaptive} \edit{based on the constant $\sigma_\infty^2(\mA)$, defined as }
\begin{equation}
\label{const:siginf}
\edit{
    \sigma_\infty^2(\mA) = \min_{\substack{\vx\in\row(\mA)\setminus\{\xopt\} \\ \exists j  \text{ s.t. } \mA_j(\vx - \xopt) = 0}}\parens{\max_i \frac{\abssq{\mAi(\vx-\xopt)}}{\normsq{\mAi}\normsq{\vx-\xopt}}},}
\end{equation}
\edit{where $\xopt \in \row(\mA)$. The improved convergence result is as follows: }

\begin{equation*}
    \Ek \normsq{\xkpo - \xopt} \le  \parens{1 - \theta \sigma_\infty^2(\mA) - (1-\theta)\frac{\sigma^2_{\min}(\mA)}{\normsq{\mA}_F}} \normsq{\xk - \xopt}
    \qqtext{for} \edit{k > 1}.
\end{equation*}
\edit{This convergence result is optimized by the parameter $\theta = 1$, see \Cref{lem:rgrkineq}, leading to the improved convergence bound of }

\begin{equation*}
    \edit{\Ek \normsq{\xkpo - \xopt} \le  \parens{1 - \sigma_\infty^2(\mA)} \normsq{\xk - \xopt}
    \qqtext{for} k > 1.}
\end{equation*}

\edit{The improvement of this new bound is explained by the relationship between $\sigma_\infty^2(\mA)$ and $\sigma_{\min}^2(\mA)$, which is matrix dependent and discussed in more detail in other works \cite{nutini2016convergence,gower2021adaptive}. The following lemma, \Cref{lem:rgrkineq}, attempts to explain this relationship (second $\leq$) and the fact that $\theta = 1$ is the optimal parameter (first $\leq$).}

\begin{lemma}
\label{lem:rgrkineq}
\edit{Let $\mA$ be a matrix then 
    \begin{equation}
   \edit{  
    \frac{\sigma^2_{\min}(\mA)}{\normsq{\mA}_F}
   \le 
   \frac{\sigma^2_{\min}(\mA)}{\gamma} 
   \le
   \sigma_\infty^2(\mA)}.
\end{equation}
Where $\sigma^2_{\min}(\mA)$ is the smallest \emph{nonzero} singular value, $ \sigma_\infty^2(\mA)$ is defined by \Cref{const:siginf} and $\gamma$ is defined by \Cref{const:gamma}.
}\end{lemma}
\begin{proof}
{
\edit{By definition $\gamma = \normsq{\mA}_F - \min_{i \in [\nrows]}\normsq{\mAi} \leq \normsq{\mA}_F$ so }
\begin{align*}
\edit{
\frac{\sigma^2_{\min}(\mA)}{\normsq{\mA}_F}
   \le 
   \frac{\sigma^2_{\min}(\mA)}{\gamma}. }
\end{align*}
\edit{Since $\sigma^2_{\min}(\mA)$ is the smallest nonzero singular value and $\xopt \in \row(\mA)$ we have the following: }
\begin{align*}
\sigma^2_{\min}(\mA) 
&= \min_{\vx \in \row(\mA) \setminus \{\xopt\}} \sum_i \frac{\abssq{\mA_i(\vx-\xopt)}}{\normsq{\vx-\xopt}}.  
\end{align*}
\edit{Multiplying and dividing by $p_i = \frac{\normsq{\mA_i}}{\normsq{\mA}_F}$ then rearranging terms yields}
\begin{align*}
\sigma^2_{\min}(\mA) 
&= \min_{\vx \in \row(\mA) \setminus \{\xopt\}} \sum_i \frac{\normsq{\mA_i}}{\normsq{\mA}_F} \frac{\normsq{\mA}_F}{\normsq{\mA_i}} \frac{\abssq{\mA_i(\vx-\xopt)}}{\normsq{\vx-\xopt}}  
\\&= \normsq{\mA}_F \min_{\vx \in \row(\mA) \setminus \{\xopt\}} \sum_i p_i  \frac{\abssq{\mA_i(\vx-\xopt)}}{\normsq{\mA_i} \normsq{\vx-\xopt}}.  
\end{align*}
\edit{Imposing the constraint that $\vx \in \row(\mA) \setminus \{\xopt\}$ \emph{and} $\exists j \text{ s.t. } \mA_j(\vx - \xopt) = 0$ then noting that $\abssq{\mA_j(\vx-\xopt)} = 0$ results in }
\begin{align*}
\sigma^2_{\min}(\mA) 
&= \normsq{\mA}_F \min_{\vx \in \row(\mA) \setminus \{\xopt\}} \sum_i p_i  \frac{\abssq{\mA_i(\vx-\xopt)}}{\normsq{\mA_i} \normsq{\vx-\xopt}}
\\&\le \normsq{\mA}_F  \min_{\substack{\vx\in\row(\mA)\setminus\{\xopt\} \\ \exists j  \text{ s.t. } \mA_j(\vx - \xopt) = 0}} \sum_i p_i  \frac{\abssq{\mA_i(\vx-\xopt)}}{\normsq{\mA_i} \normsq{\vx-\xopt}}
\\ &= \normsq{\mA}_F  \min_{\substack{\vx\in\row(\mA)\setminus\{\xopt\} \\ \exists j  \text{ s.t. } \mA_j(\vx - \xopt) = 0}} \sum_{i \neq j} p_i  \frac{\abssq{\mA_i(\vx-\xopt)}}{\normsq{\mA_i} \normsq{\vx-\xopt}}.
\end{align*}
\edit{Taking a max over the rows $i$ for the fraction of the summation }
\begin{align*}
\sigma^2_{\min}(\mA) 
&\le \normsq{\mA}_F  \min_{\substack{\vx\in\row(\mA)\setminus\{\xopt\} \\ \exists j  \text{ s.t. } \mA_j(\vx - \xopt) = 0}} \sum_{i \neq j} p_i  \frac{\abssq{\mA_i(\vx-\xopt)}}{\normsq{\mA_i} \normsq{\vx-\xopt}}
\\ &\le \normsq{\mA}_F  \min_{\substack{\vx\in\row(\mA)\setminus\{\xopt\} \\ \exists j  \text{ s.t. } \mA_j(\vx - \xopt) = 0}}  \max_{\ell} \frac{\abssq{\mA_{\ell}(\vx-\xopt)}}{\normsq{\mA_{\ell}} \normsq{\vx-\xopt}}  \sum_{i \neq j} p_i.
\end{align*}
\edit{Bounding $\sum_{i \neq j} p_i$ from above by $1 - \min_i p_i$, simplifying and applying the definition of $\sigma_\infty^2(\mA)$ }

\begin{align*}
\sigma^2_{\min}(\mA) 
&\le \normsq{\mA}_F  \min_{\substack{\vx\in\row(\mA)\setminus\{\xopt\} \\ \exists j  \text{ s.t. } \mA_j(\vx - \xopt) = 0}}  \max_{\ell} \frac{\abssq{\mA_{\ell}(\vx-\xopt)}}{\normsq{\mA_{\ell}} \normsq{\vx-\xopt}}  \sum_{i \neq j} p_i
\\ & \le \normsq{\mA}_F  \min_{\substack{\vx\in\row(\mA)\setminus\{\xopt\} \\ \exists j  \text{ s.t. } \mA_j(\vx - \xopt) = 0}}  \max_{\ell} \frac{\abssq{\mA_{\ell}(\vx-\xopt)}}{\normsq{\mA_{\ell}} \normsq{\vx-\xopt}}(1 - \min_i p_i)
\\ & = \normsq{\mA}_F  \sigma_\infty^2(\mA) (1 - \min_i p_i)
\end{align*}
\edit{Finally, distributing the $\normsq{\mA}_F$ and substituting the definition of $p_i$ and $\gamma$ }
\begin{align*}
\sigma^2_{\min}(\mA) 
&\le \normsq{\mA}_F  \sigma_\infty^2(\mA) (1 - \min_i p_i)
\\&= \sigma_\infty^2(\mA)(\normsq{\mA}_F - \min_i \normsq{\mA_i}) 
\\&= \sigma_\infty^2(\mA)\gamma.
\end{align*}
\edit{Thus, $ \frac{\sigma^2_{\min}(\mA)}{\gamma} \le \sigma_\infty^2(\mA)$ as desired.}
}
\end{proof}

\edit{The improvement of this new bound is explained by the difference between $\frac{\sigma^2_{\min}(\mA)}{\gamma}$ and $\sigma_\infty^2(\mA)$, which we quantify by the above \Cref{lem:rgrkineq}. It is matrix dependent and described in more detail in Gower et al. \cite{gower2021adaptive} and Nutini et al. \cite{nutini2016convergence}} 

\section{Lower bounds on size of Gramian selectable set size}\label{sec:lower-bd-gssrk}
Investigating the size of the selectable set is essential for understanding the convergence of the GSSRK method, \Cref{algo:gssrk} \cite{nutini2016convergence, sepehry2016finding}.  
In \Cref{sec:convergence}, we proved that the convergence of the GSSRK method is dependent on the size of the selectable set, the smaller the selectable set the better the convergence guarantee is. In this section, we prove a lower bound on the size of the Gramian selectable set after $O(m)$ iterations where $m$ is the number of rows in the matrix.
For many structured problems the lower bound on the size of the selectable set is $cm$ where $c \in (0,1)$. 
This means that the convergence guarantee only improves by a constant factor in comparison to Kaczmarz methods that do not use a selectable set.

In particular, here we prove a general method for calculating a lower bound on the Gramian based selectable set. Then we apply this lower bound to some structured problems. Our lower bound can also be used as a  heuristic to assess in which cases applying the GSSRK method could lead to a large speedup. When the Gramian is sparse, meaning that there is a lot of orthogonality between rows of the matrix, the GSSRK method will likely yield improved convergence rates.

To develop our lower bound on the selectable set, we consider the Gramian of the matrix, $\mG = \mA \mA^T$ as an adjacency matrix and examine the graph formed from this matrix, the \emph{non-orthogonality graph}. This graph contains $\nrows$ nodes, each node representing a row of the original matrix $\mA$, node $i$ represents row $\mA_i$. There is an edge between nodes $i$ and $j$ if $G_{ij} = \langle \mA_i, \mA_j \rangle \neq 0$, meaning that rows $i$ and $j$ of $\mA$ are not orthogonal. So the graph encodes the orthogonality structure of the original matrix $\mA$. Two rows are orthogonal if and only if they do not share an edge in the graph. Next we will use the non-orthogonality graph and a graph theoretic approach to develop a lower bound on the selectable set \edit{for \Cref{algo:gssrk}}.

\begin{figure}[ht]
\centering
\label{fig:GramianGraph}
\includegraphics[width=.5\linewidth]{ 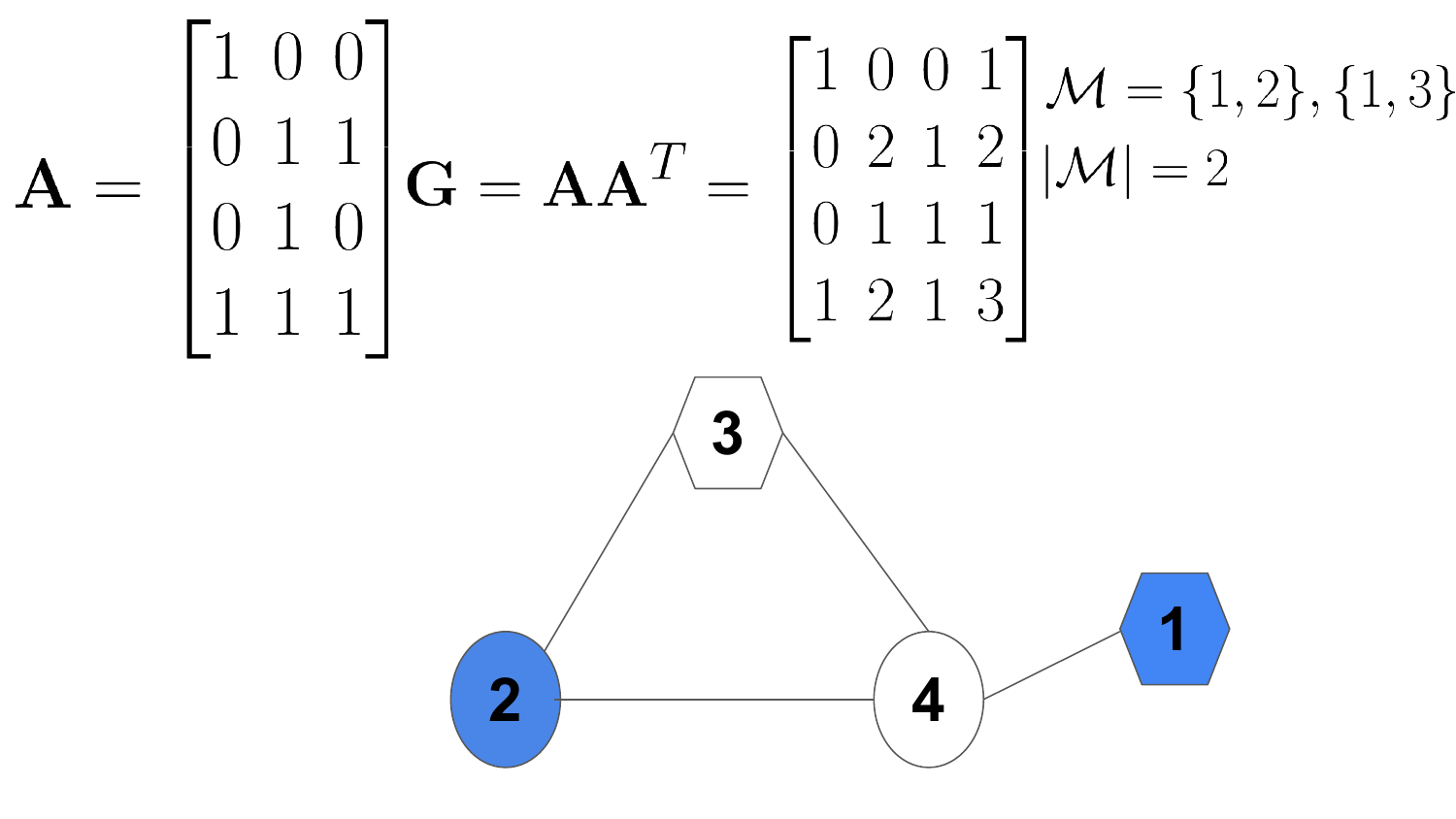}
\caption{\edit{The non-orthogonality graph constructed from the Gramian $\mG$ of the above matrix $\mA$. This undirected graph is connected because each node is reachable from every other node along the edges of the graph. The size of the maximal independent set of the graph is $2= \abs{\indepset}$ because the largest set of nodes that that do not share an edge among them is 2. If we consider the subgraph formed from nodes 1 and 2 or nodes 1 and 3, these graph have no edges. No larger sets can be created because if we add any additional nodes to these sets, the induced subgraphs will contain edges.}}
\end{figure}
\newpage
\begin{theorem}\label{thm:selectable-bound}
\edit{Given a system of equations $\mA, \vb$ and an initial iterate $\xinit$ such that $\mA \xinit \neq \vb$. Then the selectable set $\selectset_k$ from \Cref{algo:gssrk} satisfies $\nrows - \abs{\indepset} \leq \abs{\selectset_{k}} \leq \nrows-1 $ after $O(\nrows)$ iterations for all $k$. Where $\abs{\indepset}$ is the size of the maximal independent set formed by considering the Gramian matrix $\mG = \mA \mA^T$ as an adjacency matrix of a \emph{connected graph}\footnote{\edit{Connected graph corresponds to the Gramian matrix G not being a permutation of a block diagonal matrix, this is an assumption on the matrix $\mA$ not having a row or set of rows that are mutually orthogonal to all other rows.}}}.
\end{theorem}
\begin{proof}
    Consider the graph constructed from the Gramian of our matrix. Since this graph is connected and we have an unsolved equation, after $O(m)$ iterations each equation will become unsolved at least once and therefore be selectable. We now consider the maximum size of the set of unselectable rows after $O(m)$ iterations. The only rows that are unselectable are rows that have been selected by \edit{GSSRK}, meaning that all neighbors of this node must be selectable by the sampling process. Therefore if $\selectset_{k}^{c}$ is the set of unselectable rows. If $r_1, r_2 \in \selectset_{k}^{c}$, then $r_1$ and $r_2$ cannot share an edge. Otherwise, the row more recently selected in the method, without loss of generality $r_1$ is necessarily unselectable then $r_2$ would necessarily become selectable because it is a neighbor of $r_1$. Thus, each row in $\selectset_{k}^{c}$ must not be a neighbor of any other row in $\selectset_{k}^{c}$. So the maximum size of $\selectset_{k}^{c}$ is at most the size of the maximum independent set of the graph, $\indepset$. Thus the size of the selectable set is at least: $\nrows-\abs{\indepset}$.  \\
    For $k \geq 1$, $\selectset_{k}$ does not include row $i_{k-1}$ by the definition of the Kaczmarz update, at least one row is always solved, so $\abs{\selectset_{k}} \leq \nrows - 1$. 
\end{proof}

An algebraic interpretation of $\abs{\indepset}$ is that this number corresponds to the size of the largest set of rows of $\mA$ such that all elements in this set are pairwise-orthogonal. 
Equivalently, this is the number of rows in the largest submatrix of the rows of $\mA$ that can be constructed $\mQ \in \R^{\abs{\indepset}\times n}$, such that $\mQ \mQ^T = \mD$ where $\mD$ is a diagonal matrix. Since at most $\mA$ has $\abs{\indepset}$ pairwise orthogonal rows, by \Cref{lem:Gramian-orthogonality} all of the equations $\mAi \vx = b_i$ where $\mAi$ is a row of $\mQ$ can be simultaneously solved by a Kaczmarz method.

Next, we prove that the proposed lower bound of the the selectable set in \Cref{thm:selectable-bound} is tight and describe a row sampling scheme to achieve this lower bound.

\begin{proposition}\label{prop:tightbf}
There exists a sampling strategy for a Gramian based Selectable Set Kaczmarz method that achieves the lower bound of $\nrows-\abs{\indepset}$ from \Cref{thm:selectable-bound} on the size of the Gramian based selectable set. 
\end{proposition}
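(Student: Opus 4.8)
The plan is to exhibit an explicit (deterministic) sampling order for \Cref{algo:gssrk} that drives the Gramian selectable set down to exactly $\nrows - \abs{\indepset}$ within $\abs{\indepset} \le \nrows$ iterations, so that the lower bound of \Cref{thm:selectable-bound} is attained. Fix a maximum independent set $\indepset = \{q_1, q_2, \ldots, q_t\}$ of the non-orthogonality graph, where $t = \abs{\indepset}$, start from $\selectset_0 = [\nrows]$, and use the sampling rule $i_k = q_{k+1}$ for $k = 0, 1, \ldots, t-1$ (and any valid draw thereafter).

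The heart of the proof is a short induction showing $\selectset_k = [\nrows] \setminus \{q_1, \ldots, q_k\}$ for $0 \le k \le t$. The base case $k = 0$ is immediate. For the step, $q_{k+1} \in \selectset_k$ since $q_{k+1} \notin \{q_1, \ldots, q_k\}$, so the draw is valid; then the GSSRK update gives $\selectset_{k+1} = (\selectset_k \cup \{j : G_{q_{k+1} j} \neq 0\}) \setminus \{q_{k+1}\}$, and because $\indepset$ is independent the neighbor set $\{j : G_{q_{k+1} j} \neq 0\}$ contains none of $q_1, \ldots, q_t$, while every index not among the $q_i$ already lies in $\selectset_k$ by the inductive hypothesis. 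Hence $\selectset_{k+1} = [\nrows] \setminus \{q_1, \ldots, q_{k+1}\}$. Taking $k = t$ yields $\selectset_t = [\nrows] \setminus \indepset$, so $\abs{\selectset_t} = \nrows - \abs{\indepset}$.

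To confirm this run is internally consistent, i.e. that $\selectset_t$ is genuinely a selectable set in the sense of \Cref{def:selectset}, I would check that each $q_i$ is solved by $\vx^t$: projecting onto $q_i$ at step $i-1$ solves equation $q_i$, and each later projection at steps $i, \ldots, t-1$ is onto $q_{i+1}, \ldots, q_t$, all orthogonal to $\mA_{q_i}$ by independence, so \Cref{lem:Gramian-orthogonality} keeps equation $q_i$ solved through step $t$. Thus $\selectset_t^C = \indepset$, so the construction is a bona fide instance of \Cref{algo:gssrk}.

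I do not expect a real obstacle; the points to be careful about are that each selected row lies in the current selectable set (handled by the induction hypothesis) and that independence of $\indepset$ is invoked exactly twice — once to prevent any $q_i$ from re-entering the selectable set when we project onto $q_{k+1}$, and once (via \Cref{lem:Gramian-orthogonality}) to keep the already-solved equations solved. One may also remark that $\nrows - \abs{\indepset}$ is attained but cannot in general be maintained forever: every vertex $r \notin \indepset$ has at least one neighbor in $\indepset$, for otherwise $\indepset \cup \{r\}$ would be a larger independent set, so the very next update re-introduces at least one element of $\indepset$ into the selectable set.
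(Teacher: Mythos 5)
Your proposal is correct and takes essentially the same route as the paper: select the rows of a maximum independent set $\indepset$ in succession and use their pairwise orthogonality to conclude that no element of $\indepset$ is reintroduced into the selectable set, yielding $\abs{\selectset_t} = \nrows - \abs{\indepset}$. Your version is somewhat more careful than the paper's (explicit induction, verification via \Cref{lem:Gramian-orthogonality} that each selected equation stays solved), but the underlying idea is identical.
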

\begin{proof}
To achieve this lower bound on the selectable set, we construct a sequence of rows to be selected, sampled. After the method selects each row in this sequence once, the size of the selectable set is exactly $\nrows -\abs{\indepset}$. Let $\indepset$ denote the maximum independent set of Gramian graph $G$. Then the sampling strategy in which we select each row from $\indepset$ in succession results in each of the rows of $\indepset$ being not selectable. Since none of the rows in $\indepset$ are neighbors in the graph, meaning that all rows in $\indepset$ are pairwise orthogonal, selecting any row in $\indepset$ does not impact the selectable or unselectable status of any other row in $\indepset$. Thus, if we select each row in $\indepset$ once, there will be exactly $\abs{\indepset}$ unselectable rows, meaning that size of the selectable set will be exactly $\nrows-\abs{\indepset}$. 
\end{proof}

 If the maximum independent set formed from the Gramian is large, then the size of the selectable set will be small for  \Cref{algo:gssrk}. 
 Thus, based on our convergence analysis for selectable set methods, \Cref{thm:selectable-one-step}, we will have a convergence guarantee improvement over Kaczmarz methods that do not use a selectable set. We examine and provide examples of this convergence improvement experimentally in \Cref{sec:experiments}.

Next, we investigate how we can apply \Cref{thm:selectable-bound} to basic structured non-orthogonality graphs and examine the size of their selectable sets. 
We select the following graphs as they correspond to non-orthogonality graphs constructed from very sparse systems, and many serve as simple but motivating examples for such structures that arise in practice. We anticipate that by examining these specific structured graphs we may be able to understand when applying GSSRK to a system will yield significant convergence improvement.

First, we consider the path graph. This is a graph in which the adjacency matrix \edit{(the Gramian)} has nonzero entries exclusively along the diagonal, the super diagonal \edit{and sub-diagonal}, representative of sparse matrices from networks applications.


\begin{corollary}[Path Gramian]
\label{cor:path}
For the case of a path graph Gramian with $\nrows$ vertices, the size of the selectable set is lower bounded by $\left \lfloor \frac{\nrows}{2} \right \rfloor$.
\end{corollary}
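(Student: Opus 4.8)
The plan is to apply \Cref{thm:selectable-bound}, which reduces the problem to computing (or lower-bounding) the size of the maximum independent set $\indepset$ of the path graph on $\nrows$ vertices. The path graph has vertex set $\{1,2,\dots,\nrows\}$ with an edge between consecutive integers $i$ and $i+1$. Its maximum independent set is obtained by taking every other vertex — for instance all odd-indexed vertices $\{1,3,5,\dots\}$ — which has cardinality $\lceil \nrows/2 \rceil$. One should briefly check that this is indeed maximum: any independent set can contain at most one vertex from each of the $\lfloor \nrows/2\rfloor$ disjoint edges $\{1,2\},\{3,4\},\dots$, plus possibly the leftover vertex if $\nrows$ is odd, giving the matching upper bound $\lceil \nrows/2\rceil$. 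So $\abs{\indepset} = \lceil \nrows/2 \rceil$.

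Next I would plug this into \Cref{thm:selectable-bound}, which (under the stated hypotheses that the non-orthogonality graph is connected — which the path graph is — and that the initial system has unsolved equations) gives $\abs{\selectset_k} \ge \nrows - \abs{\indepset} = \nrows - \lceil \nrows/2 \rceil$ after $O(\nrows)$ iterations. The final step is the elementary identity $\nrows - \lceil \nrows/2 \rceil = \lfloor \nrows/2 \rfloor$, valid for all integers $\nrows$ (both when $\nrows$ is even and when it is odd), which yields the claimed bound $\abs{\selectset_k} \ge \lfloor \nrows/2 \rfloor$.

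I do not expect any real obstacle here: the corollary is a direct specialization of \Cref{thm:selectable-bound}, and the only content is the combinatorial fact that the path on $\nrows$ vertices has independence number $\lceil \nrows/2 \rceil$. The mild care needed is just in tracking floors versus ceilings so that the final expression matches the statement exactly, and in noting that the path graph satisfies the connectivity hypothesis of \Cref{thm:selectable-bound} so that the theorem applies verbatim.
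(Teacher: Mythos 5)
Your proposal is correct and follows essentially the same route as the paper: identify the independence number of the path as $\left\lceil \nrows/2 \right\rceil$ and substitute into \Cref{thm:selectable-bound}. The only difference is that you also verify maximality via the disjoint-edge (matching) argument, which the paper asserts without proof but which is actually needed since the theorem's bound degrades as $\abs{\indepset}$ grows.
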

\begin{proof}
    Taking every other vertex in the path, we obtain a maximum independent set of size $\left \lceil \frac{\nrows}{2} \right \rceil$. We then apply \Cref{thm:selectable-bound} to obtain the bound $\abs{\selectset_{k}} \geq \nrows - \left \lceil \frac{\nrows}{2} \right \rceil =\left \lfloor \frac{\nrows}{2} \right \rfloor$.
\end{proof}

Next we consider a star graph whose adjacency matrix contains nonzeros on the diagonal, 
along a single row $\ell$, and the column $\ell$. This is representative of a matrix that is mostly sparse but may contain some nonsparse rows.
An example of this structure arises in collaborative filtering where data may represent user prescribed ratings. Most users will have few ratings representing sparse rows, but some super-users will have many ratings, representing the nonsparse rows of the matrix.  

\begin{corollary}[Star Gramian]
\label{cor:star}
For the case of a star graph Gramian with $\nrows$ vertices, the size of the selectable set is lower bounded by $1$.
\end{corollary}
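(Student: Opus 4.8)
The plan is to specialize \Cref{thm:selectable-bound} to the star graph by computing the size of its maximum independent set. First I would describe the graph: a star graph on $\nrows$ vertices has one center vertex $\ell$ adjacent to each of the remaining $\nrows-1$ leaf vertices, with no edges among the leaves (self-loops, i.e. $G_{ii}\neq 0$, are ignored as elsewhere in the paper). The set of all $\nrows-1$ leaves is then an independent set, since no two leaves share an edge. To see it is of maximum size, I would split into two cases: any independent set containing the center $\ell$ must avoid every leaf, since $\ell$ is adjacent to all of them, and hence has size $1$; any independent set not containing $\ell$ is a subset of the leaves and therefore has size at most $\nrows-1$. Combining the cases gives $\abs{\indepset} = \nrows-1$.

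With $\abs{\indepset}$ in hand, the corollary follows immediately: the star graph is connected, so under the standing assumption that the system $\mA\vx=\vb$ has an unsolved equation, \Cref{thm:selectable-bound} yields $\abs{\selectset_k} \ge \nrows - \abs{\indepset} = \nrows - (\nrows-1) = 1$ after $O(\nrows)$ iterations. There is no real obstacle here — the only point requiring a word of justification is the maximality of the leaf set as an independent set, which is settled by the elementary two-case argument above. The result is essentially \Cref{thm:selectable-bound} evaluated at the extreme configuration where the independent set is as large as it can possibly be, which is exactly why the resulting lower bound on $\abs{\selectset_k}$ is the weakest possible and GSSRK offers no guaranteed speedup for star-structured Gramians.
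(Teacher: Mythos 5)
Your proposal is correct and follows exactly the paper's route: identify the $\nrows-1$ leaves as the maximum independent set and plug $\abs{\indepset}=\nrows-1$ into \Cref{thm:selectable-bound}. The only difference is that you explicitly justify maximality of the leaf set, which the paper asserts without argument.
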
 

\begin{proof}
The $\nrows-1$ leaves form a maximum independent set of size $\nrows-1$. We then apply \Cref{thm:selectable-bound} to obtain the bound $\abs{\selectset_{k}} \geq \nrows - (\nrows-1)  = 1$.
\end{proof}

We note that the high degree node of the star represents a nonsparse row which may cause minimal convergence improvement when applying selectable set methods.
Next we consider a cycle graph in which the adjacency matrix has nonzeros along the diagonal, super diagonal, \edit{sub-diagonal}, \edit{top right corner} and the bottom left corner. This represents a sparse matrix similar to \Cref{cor:path} with slightly less sparsity. 

\begin{corollary}[Cycle Gramian]
\label{cor:cycle}
For the case of a cycle graph Gramian with $\nrows$ vertices, the size of the selectable set is lower bounded by $\left \lceil \frac{\nrows}{2} \right \rceil$.
\end{corollary}

\begin{proof}
Taking every other vertex in the cycle except the last one if $\nrows$ is odd, we obtain a maximum independent set of size $\left \lfloor \frac{\nrows}{2} \right \rfloor$. We then apply \Cref{thm:selectable-bound} to obtain the bound $\abs{\selectset_{k}} \geq \edit{\nrows} - \left \lfloor \frac{\nrows}{2} \right \rfloor = \left \lceil \frac{\nrows}{2} \right \rceil$.
\end{proof}

Next we consider a banded graph in which the adjacency matrix contains nonzeros along the diagonal, $\edit{\ell}$ super diagonals and $\edit{\ell}$ sub-diagonals. This adjacency matrix occurs commonly in semi-supervised graph learning tasks in which only data for the k nearest neighbors of a node is kept.


\begin{corollary}[Banded Gramian]
\label{cor:banded}
For the case of a banded matrix Gramian with $\nrows$ vertices, \edit{bandwidth $\ell$} \edit{(upper and lower bandwidth $\edit{\ell}$)}, the size of the selectable set is lower bounded by $\edit{\left \lfloor \frac{\ell \nrows}{\ell + 1} \right \rfloor}$.
\end{corollary}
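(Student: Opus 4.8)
The plan is to combine \Cref{thm:selectable-bound} with an upper bound on the independence number $\abs{\indepset}$ of the banded non-orthogonality graph, generalizing the argument behind \Cref{cor:path}. First I would pin down the graph precisely. A banded matrix $\mA$ with upper bandwidth $\ell_1$ and lower bandwidth $\ell_2$ has the support of row $\mAi$ contained in the column window $\{i-\ell_2,\dots,i+\ell_1\}$ (clipped to valid indices), so $G_{ij}=\langle\mAi,\mA_j\rangle$ can be nonzero only when these windows overlap, which happens exactly when $\abs{i-j}\le \ell_1+\ell_2$; conversely, for a full band this inner product is generically nonzero for all such pairs. Thus the non-orthogonality graph has vertex set $[m]$ with an edge between $i$ and $j$ whenever $0<\abs{i-j}\le w$, where $w:=\ell_1+\ell_2$ — equivalently it is the $w$-th power of the path on $m$ vertices. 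It is connected whenever $w\ge 1$, since consecutive indices are always adjacent, and the remaining hypothesis of \Cref{thm:selectable-bound}, that the initial system has an unsolved equation, is assumed throughout.

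Second, I would bound $\abs{\indepset}$ from above. Partition $[m]$ into $\lceil m/(w+1)\rceil$ consecutive blocks $\{1,\dots,w+1\},\{w+2,\dots,2w+2\},\dots$ of length $w+1$ (the last possibly shorter). Any two indices lying in a common block differ by at most $w$ and are therefore adjacent, so an independent set contains at most one vertex per block; hence $\abs{\indepset}\le \lceil m/(w+1)\rceil$. (This is in fact tight, as the set $\{1,\,w+2,\,2w+3,\dots\}$ is independent of exactly this cardinality, but only the upper bound is needed here.)

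Finally, I would apply \Cref{thm:selectable-bound}: after $O(m)$ iterations, $\abs{\selectset_k}\ge m-\abs{\indepset}\ge m-\lceil m/(w+1)\rceil$ for all such $k$. It then remains to observe the integer identity $m-\lceil m/(w+1)\rceil=\lfloor mw/(w+1)\rfloor$, which holds because $m$ is an integer and $mw/(w+1)=m-m/(w+1)$. Substituting $w=\ell_1+\ell_2$ yields the claimed bound $\abs{\selectset_k}\ge\left\lfloor \frac{m(\ell_1+\ell_2)}{\ell_1+\ell_2+1}\right\rfloor$.

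The main obstacle — essentially the only nontrivial point — is the first step: correctly identifying the non-orthogonality graph of a banded matrix as the path power $P_m^{\ell_1+\ell_2}$ rather than, say, $P_m^{\max(\ell_1,\ell_2)}$. The band of a Gramian grows by the \emph{sum} of the one-sided bandwidths of $\mA$, and it is this fact that produces the $\ell_1+\ell_2$ appearing in the statement. Once that identification is made, the pigeonhole bound on the independence number and the floor/ceiling bookkeeping are routine.
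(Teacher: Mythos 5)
Your proposal is correct and follows essentially the same route as the paper: identify the banded non-orthogonality graph, bound its independence number by $\left\lceil \nrows/(1+\ell_1+\ell_2)\right\rceil$, and plug this into \Cref{thm:selectable-bound} with the floor/ceiling identity $\nrows-\lceil \nrows/(w+1)\rceil=\lfloor \nrows w/(w+1)\rfloor$. The one substantive refinement is that your block-pigeonhole argument supplies the \emph{upper} bound on $\abs{\indepset}$ that the theorem actually requires (and correctly derives the width $\ell_1+\ell_2$ of the Gramian's band from the one-sided bandwidths of $\mA$), whereas the paper's proof only exhibits an independent set of that size and asserts its maximality.
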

\begin{proof}
\edit{The maximal independent set can be constructed by taking the first node, node 1, which is adjacent to $\ell$ other nodes. Then taking node $1+\ell +1$ to avoid the $\ell$ neighbors of the first node. This node has at most $2\ell$ neighbors but shares $\ell$ of them with the first node. So now we can take node $1+ 2\ell +2$. Repeating this process we deduce that we can select one node from every $\ell + 1$ nodes. Resulting in a maximal independent set of size  $\left \lceil \frac{\nrows}{\ell+1} \right \rceil$.
We then apply \Cref{thm:selectable-bound} to obtain the bound  $\abs{\selectset_{k}} \geq \nrows - \left \lceil \frac{\nrows}{\ell+1} \right \rceil =  \left \lfloor \frac{\nrows\ell + \nrows}{\ell+1} - \frac{\nrows}{\ell+1} \right \rfloor = \left \lfloor \frac{\ell \nrows}{\ell + 1} \right \rfloor$.}
\end{proof}

Note that a path graph is an example of a banded matrix with \edit{bandwidth 1}. Finally, we consider \edit{a symmetric} $\ell$ regular graph. The adjacency matrix for this graph has $\ell$ nonzero entries \edit{on each row and corresponding column}, excluding the diagonal entries \edit{thus $\ell < \nrows$}. \edit{Since this adjacency matrix corresponds to an undirected graph, it is symmetric.}
This adjacency matrix pattern is likely to occur when applying a generalized k-nearest neighbors algorithm to graph data.

\begin{corollary}[$\ell$-regular Gramian]
\label{cor:regular}
For the case of an $\ell$-regular graph with $\nrows$ vertices and degree $\ell \edit{ < m} $, the size of the selectable set is lower bounded by $\max(\left \lceil \frac{\nrows}{2} \right \rceil, \ell)$.
\end{corollary}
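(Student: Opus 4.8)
The plan is to reduce everything to a bound on the maximum independent set of the non-orthogonality graph and then invoke \Cref{thm:selectable-bound}. Writing $\indepset$ for the maximum independent set of the $\ell$-regular graph on $\nrows$ vertices, \Cref{thm:selectable-bound} gives $\abs{\selectset_k} \ge \nrows - \abs{\indepset}$ after $O(\nrows)$ iterations, so it suffices to prove the single inequality $\abs{\indepset} \le \min(\lfloor \nrows/2\rfloor,\ \nrows - \ell)$. Indeed $\nrows - \min(\lfloor \nrows/2\rfloor, \nrows - \ell) = \max(\lceil \nrows/2\rceil, \ell)$ (using $\nrows - \lfloor \nrows/2\rfloor = \lceil \nrows/2\rceil$), which is exactly the claimed lower bound.

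For the first half of the minimum, $\abs{\indepset} \le \nrows - \ell$, I would fix any vertex $v \in \indepset$ — the set is nonempty since a single vertex is always independent. Because the graph is $\ell$-regular, $v$ has exactly $\ell$ neighbors, none of which can lie in $\indepset$, and $v$ is not one of them as the graph has no self-edges; hence $\indepset$ is contained in a set of $\nrows - \ell$ vertices.

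For the second half, $\abs{\indepset} \le \lfloor \nrows/2\rfloor$, I would double-count the edges of the cut between $\indepset$ and $[m]\setminus\indepset$. Since $\indepset$ is independent, every one of the $\ell\abs{\indepset}$ edge-endpoints incident to $\indepset$ crosses the cut, while the vertices of $[m]\setminus\indepset$ have total degree $\ell(\nrows - \abs{\indepset})$ and so can absorb at most that many crossing endpoints. Thus $\ell\abs{\indepset} \le \ell(\nrows-\abs{\indepset})$, and dividing by $\ell>0$ yields $2\abs{\indepset}\le\nrows$. Combining the two bounds and substituting into \Cref{thm:selectable-bound} finishes the proof.

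Both pieces are short counting arguments, so the main thing to be careful about is the hypotheses: \Cref{thm:selectable-bound} requires the non-orthogonality graph to be connected and the initial system to have unsolved equations, and the degenerate $\ell = 0$ case (an edgeless, disconnected graph) must be excluded — as in the earlier structured examples (path, cycle, banded), the corollary is understood for a connected graph with $\ell \ge 1$. It is also worth noting that the lower bound $\abs{\selectset_k}\ge\ell$ can be read off directly from the GSSRK update in \Cref{algo:gssrk}: immediately after row $i_{k-1}$ is projected, all $\ell$ of its neighbors are placed back into the selectable set, so this half of the bound does not actually require the independent-set machinery.
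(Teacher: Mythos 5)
Your proposal is correct and follows essentially the same route as the paper: bound the maximum independent set $\indepset$ of the $\ell$-regular non-orthogonality graph by $\min(\lfloor \nrows/2\rfloor, \nrows-\ell)$ and substitute into \Cref{thm:selectable-bound}. The only difference is that the paper imports this independent-set bound from a cited reference, whereas you supply short, correct counting proofs of both halves (the neighborhood argument for $\nrows-\ell$ and the cut double-count for $\lfloor \nrows/2\rfloor$, the latter requiring $\ell\ge 1$ as you note), making the corollary self-contained.
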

\begin{proof}
    The size of the maximum independent set of an $\ell$-regular graph \cite{rosenfeld1964independent} is upper bounded by \\ $\min(\left \lfloor \frac{\nrows}{2} \right \rfloor, \nrows-\ell)$. Thus, we can apply \Cref{thm:selectable-bound} to obtain the bound $\abs{\selectset_{k}} \geq \max(\left \lceil \frac{\nrows}{2}\right \rceil, \ell)$.
\end{proof}

\Cref{thm:selectable-bound} gives us a lower bound on the size of the selectable set in many cases, see \Cref{tab:ss-lowerbound}. However, to apply this theorem directly, we need an upper bound on the size of the maximum independent set. For some classes of graphs, while a lower bound may be achievable, the upper bound is the trivial bound of $\nrows - 1$. 

\begin{table}
\begin{center}
\begin{tabular}{lcc} 
 \toprule
 Gramian graph & \makecell{Maximum independent \\ set size} & \makecell{Lower bound on \\ size of selectable set $\abs{\selectset}$} \\ \midrule
path graph  & $\left \lceil \frac{\nrows}{2} \right \rceil$ & $\left \lfloor \frac{\nrows}{2} \right \rfloor$\\
star graph & m-1 & 1 \\
cycle graph & $\left \lfloor \frac{\nrows}{2} \right \rfloor$ & $\left \lceil \frac{\nrows}{2} \right \rceil$\\
banded matrix graph, \edit{bandwidth $\ell$} & $\edit{\left \lceil \frac{\nrows}{\ell+1} \right \rceil}$ & $\edit{\left \lfloor \frac{ \ell \nrows}{\ell + 1} \right \rfloor}$ \\
$\ell$-regular graph & $\min(\left \lfloor \frac{\nrows}{2}\right \rfloor, \nrows-\ell)$ & $\max(\left \lceil \frac{\nrows}{2} \right \rceil, \ell)$\\
 \bottomrule
\end{tabular}
\caption{Lower bounds on size of the selectable set for structured Gramian problems.}
\label{tab:ss-lowerbound}
\end{center}
\end{table}

\FloatBarrier
\section{Experiments}\label{sec:experiments}

We evaluate RK, NSSRK, GSSRK, and GRK on a series of synthetic and real world matrices from the  SuiteSparse matrix library \footnote{Implementation of methods used: \url{https://github.com/jdmoorman/kaczmarz-algorithms/}.}. For each matrix, we plot the squared error norm versus iteration number for 20000 iterations. Results were averaged over \edit{100} trials. \edit{Each line corresponds to the average error at the iteration over the 100 trials and the shading corresponds to one standard deviation above the mean and one standard deviation below the mean at each iteration.}\edit{The vectors $\xopt$ and $\vb$ for all of the experiments are constructed by taking a standard random normal vector $\vv$ (of mean 0 and standard deviation 1 entries), computing $\xopt = \frac{\mA^T \vv}{\norm{\mA^T \vv}}$, then applying $\mA$ such that $\vb = \mA \xopt$.} 

\edit{The two synthetic matrices that we consider are the circulant matrix and the 3-banded matrix.} 
\edit{The circulant matrix is a $100 \times 100 $ matrix with $\sigma_{\min} = 0.690$, see \Cref{fig:ssrk_circ_uniform} and \Cref{fig:ssrk_circ_row}. The matrix has non-zeros on the diagonal, sub-diagonal and top right corner. The non-zero entries of each row are the row number $i$ divided by $\sqrt{2}$ so each row has non-zero entries $\frac{i}{\sqrt{2}}$. The Gramian of the circulant matrix corresponds to the cycle graph Gramian described in \Cref{cor:cycle}.} 
\edit{The 3-banded matrix is a $100 \times 100$ matrix with standard random normal entries along the diagonal and the three rows above and below the diagonal producing a matrix with bandwidth 3 and $\sigma_{\min} = 0.0180$, see \Cref{fig:ssrk_banded_uniform} and \Cref{fig:ssrk_banded_row}. The Gramian of this matrix corresponds to a banded matrix graph with bandwidth 6, in \Cref{cor:banded}.} 

\edit{The two real world matrices we consider are Cities and the transpose of the N\_pid from the SuiteSparse matrix library \cite{davis2011university}.} 
\edit{The Cities matrix is a dense matrix of size $55 \times 46$ with $\sigma_{\min}= 0.271$, see \Cref{fig:ssrk_cities_uniform} and \Cref{fig:ssrk_cities_row}.}
\edit{The transpose of the N\_pid matrix is a sparse matrix of size $3923 \times 3625$ with $8054$ nonzero entries that has $\sigma_{\min}= 0.0690$, see \Cref{fig:ssrk_npid_uniform} and \Cref{fig:ssrk_npid_row}.}

\begin{figure}[ht]
    \centering
    \hfill
     \begin{subfigure}[b]{0.45\textwidth}
         \centering
         \includegraphics[width=\textwidth]{ 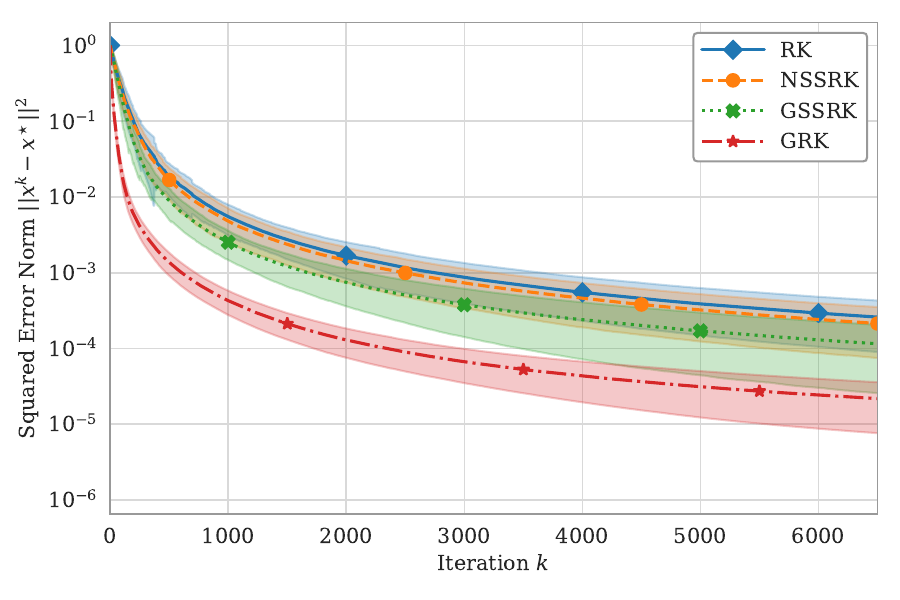}
         \captionsetup{font=small}
         \caption{Circulant Matrix Uniform Distribution $\sigma_{\min} = 0.690$}
         \label{fig:ssrk_circ_uniform}
     \end{subfigure}
    \hfill
    \begin{subfigure}[b]{0.45\textwidth}
         \centering
         \includegraphics[width=\textwidth]{ 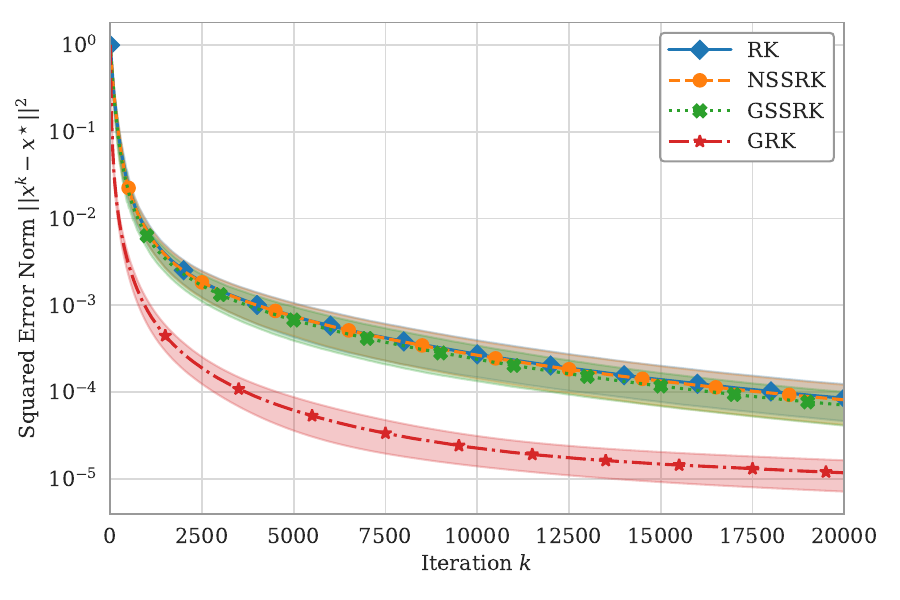}
         \captionsetup{font=small}
         \caption{Banded Matrix Uniform Distribution $\sigma_{\min} = 0.0180$}
         \label{fig:ssrk_banded_uniform}
     \end{subfigure}
   \vskip\baselineskip
     \hfill
     \begin{subfigure}[b]{0.45\textwidth}
         \centering
         \includegraphics[width=\textwidth]{ 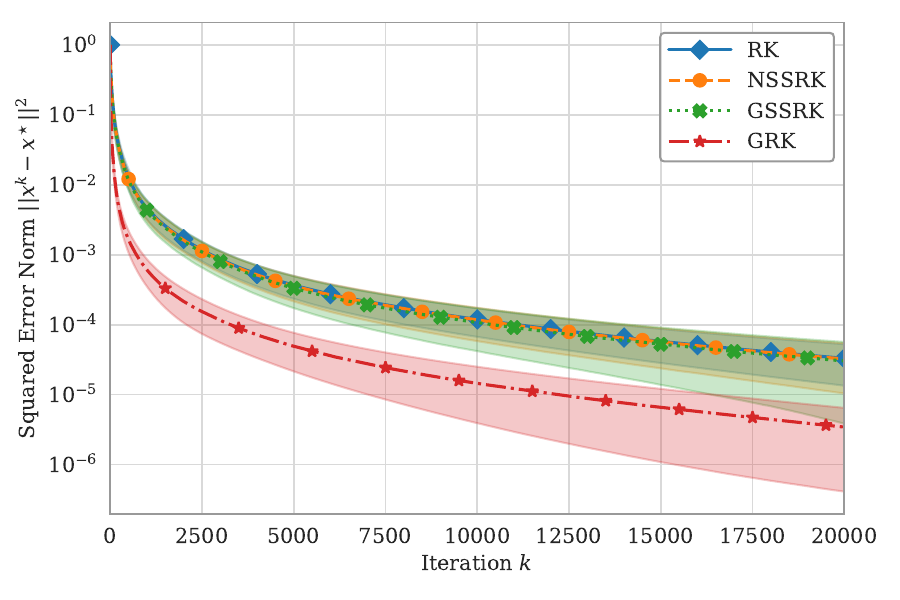}     \captionsetup{font=small}
         \caption{Cities Matrix Uniform Distribution $\sigma_{\min}= 0.271$}
         \label{fig:ssrk_cities_uniform}
     \end{subfigure}
     \hfill
     \begin{subfigure}[b]{0.45\textwidth}
         \centering
         \includegraphics[width=\textwidth]{ 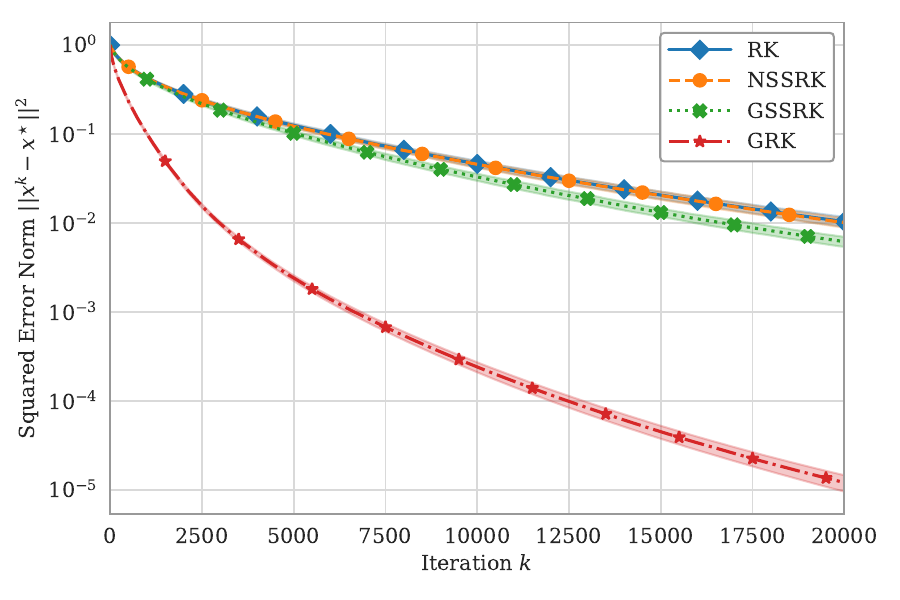}
         \captionsetup{font=small}
         \caption{N\_Pid Matrix Uniform Distribution $\sigma_{\min}= 0.0690$}
         \label{fig:ssrk_npid_uniform}
     \end{subfigure}
    \caption{\edit{Squared error norm versus iteration for RK, NSSRK, GSSRK, and GRK (RGRK with $\theta = 1/2$) using \emph{uniform row probabilities} for RK, NSSRK and GSSRK. Results were averaged over 100 trials. \Cref{fig:ssrk_banded_uniform} is a 3-banded matrix, \Cref{fig:ssrk_circ_uniform} is a circulant matrix, and \Cref{fig:ssrk_cities_uniform,fig:ssrk_npid_uniform} are two real world matrices, Cities and N\_pid all described in detail above. The shading denotes one standard deviation.}} 
    \label{fig:ssrk_figure_uniform}
\end{figure}

\edit{The four algorithms that we consider are RK, NSSRK, GSSRK, and GRK, each of which has differing computational complexity. RK has a computational complexity of $\mathcal{O}(\ncols)$ per iteration, where $\ncols$ is the number of columns in the matrix. Similarly, NSSSRK also has a computational complexity of $\mathcal{O}(\ncols)$, requiring $\mathcal{O}(1)$ to update the selectable set. Unlike NSSRK, GSSRK has the added overhead of updating the selectable set at each iteration requiring $\mathcal{O}(\ncols + \nrows)$ at each iteration, $\mathcal{O}(\ncols)$ to compute the Kaczmarz update, \Cref{eqn:kacz-update}, and $\mathcal{O}(\nrows)$, where $\nrows$ is the number of rows, to look at a row of the Gramian matrix and update the selectable set. Additionally, if it is necessary to pre-compute the Gramian with standard matrix multiplication, this requires $\mathcal{O}(\ncols\nrows^2)$. Finally, the most computationally costly method per iteration is GRK, which requires $\mathcal{O}(\ncols\nrows)$ at each iteration as a full residual computation, $\mA \vx_k - \vb$, is required to construct the sampling set and distribution. Of course, these are all generic bounds, and different implementations may lead to improvements, such as parallelization, fast multiplies, and other specific uses of the system's structure.}

In all four of the matrices pictured, as well as all the other matrices, we evaluated our methods on, we saw little to no difference between the performance of RK and NSSRK. GRK also outperformed RK, NSSRK, and GSSRK on every matrix. However, we did observe some differences in the performance of GSSRK relative to the other methods. In some examples, we see that GSSRK performs the same as RK and NSSRK, while in others, it performs slightly better.

In \Cref{fig:ssrk_banded_uniform} and \Cref{fig:ssrk_circ_uniform}, our two synthetic matrices, we see that one of them (\Cref{fig:ssrk_banded_uniform}) displays very little difference in performance between GSSRK and RK, while the other (\Cref{fig:ssrk_circ_uniform}) has a much larger gap in performance between the two. This is due to the difference in size of their respective selectable sets. In the circulant matrix, each row is only non-orthogonal to two others, while each row in the \edit{banded matrix} is non-orthogonal to six others. With a sparser non-orthogonality graph, we expect to see smaller selectable sets, which leads to improved performance.

We observe that in \Cref{fig:ssrk_cities_uniform} there is no performance gap between GSSRK and RK on the Cities matrix. Because the Cities matrix has no rows that are mutually orthogonal, the selectable set for this matrix includes every row, except for the one that was just picked, and thus the GSSRK method is equivalent to NSSRK. In \Cref{fig:ssrk_npid_uniform}, we see that GSSRK outperforms RK. Computing the Gramian of the N\_pid matrix, we see that it is much sparser, with fewer than $0.1\%$ of the entries being nonzero. Thus, the selectable set is much smaller, leading to improved performance, consistent with our theoretical results. \edit{We note that RK, NSSRK and GSSRK all require a sampling distribution that is commonly either the uniform or the row norm distribution \cite{strohmer2009randomized} while GRK requires a choice of $\theta$ that was set to 0.5 as in \cite{bai2018greedy}. The results from both uniform and row norm distributions are shown and both display similar comparisons, where GRK performs best but is most costly, followed by GSSRK, then NSSRK, then RK. }

\begin{figure}[ht]
    \centering
     \hfill
     \begin{subfigure}[b]{0.45\textwidth}
         \centering
         \includegraphics[width=\textwidth]{ 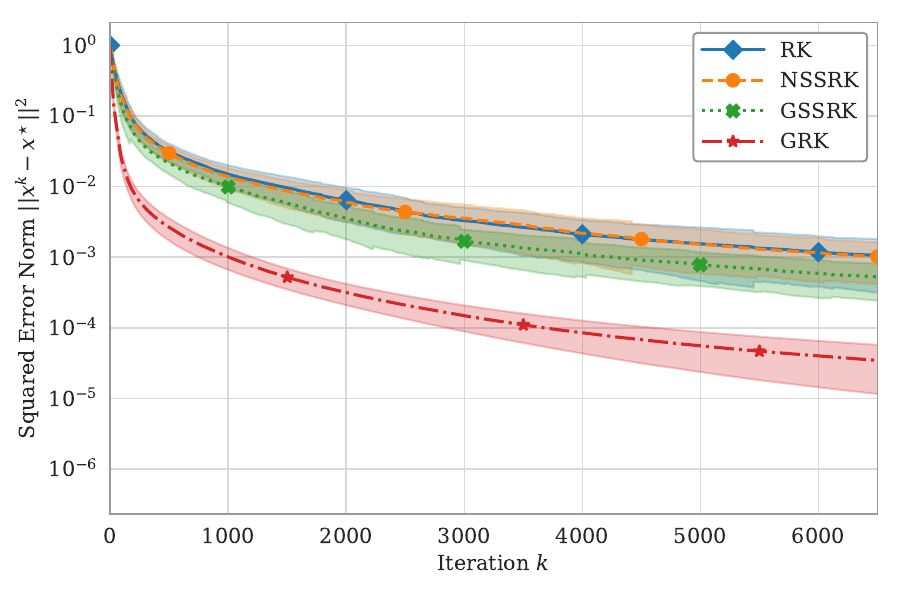}
         \captionsetup{font=small}
         \caption{Circulant Matrix Row Norm Distribution $\sigma_{\min} = 0.690$}
         \label{fig:ssrk_circ_row}
     \end{subfigure}
    \hfill
    \begin{subfigure}[b]{0.45\textwidth}
         \centering
         \includegraphics[width=\textwidth]{ 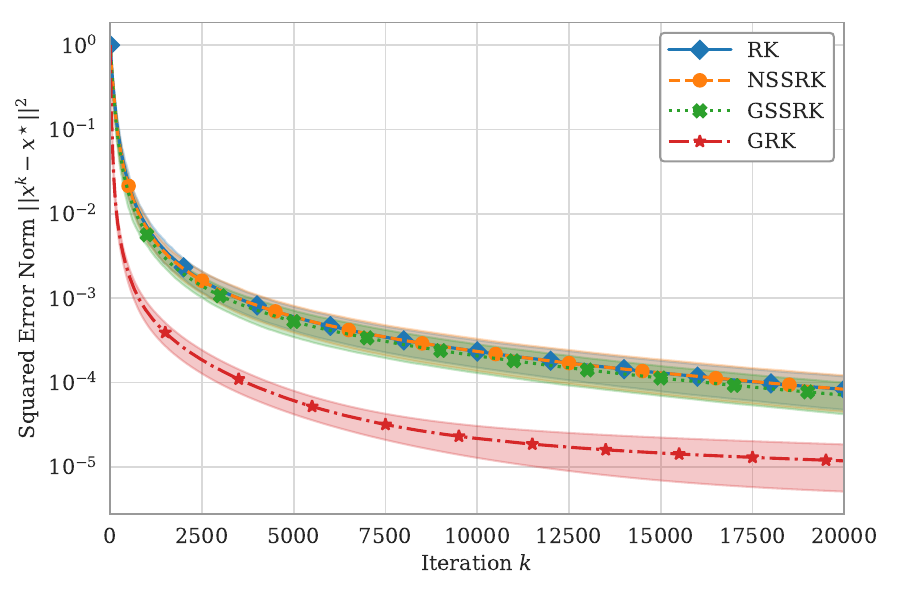}
         \captionsetup{font=small}
         \caption{Banded Matrix Row Norm Distribution $\sigma_{\min} = 0.0180$}
         \label{fig:ssrk_banded_row}
     \end{subfigure}
     \vskip\baselineskip
     \hfill
     \begin{subfigure}[b]{0.45\textwidth}
         \centering
         \includegraphics[width=\textwidth]{ 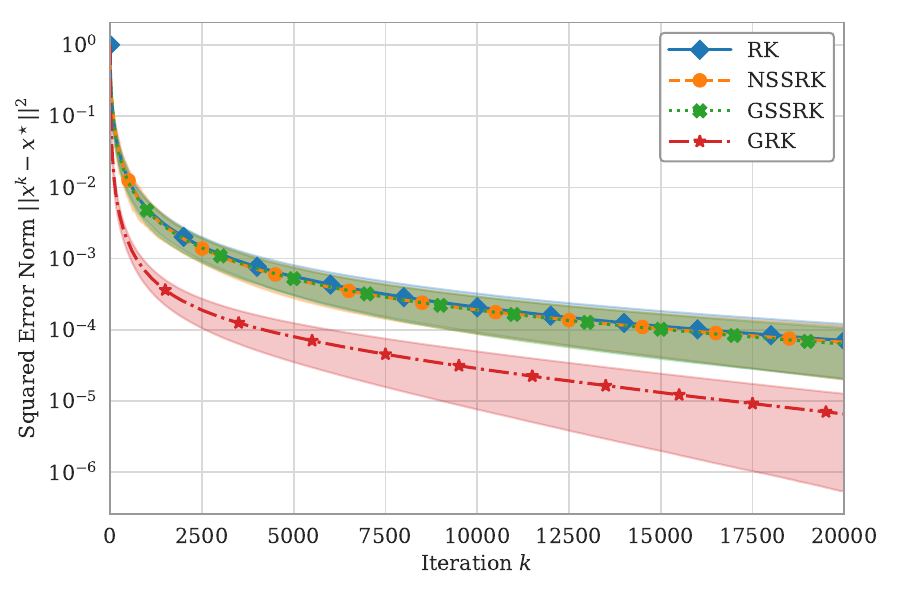}         \captionsetup{font=small}
         \caption{Cities Matrix Row Norm Distribution $\sigma_{\min}= 0.271$}
         \label{fig:ssrk_cities_row}
     \end{subfigure}
     \hfill
     \begin{subfigure}[b]{0.45\textwidth}
         \centering
         \includegraphics[width=\textwidth]{ 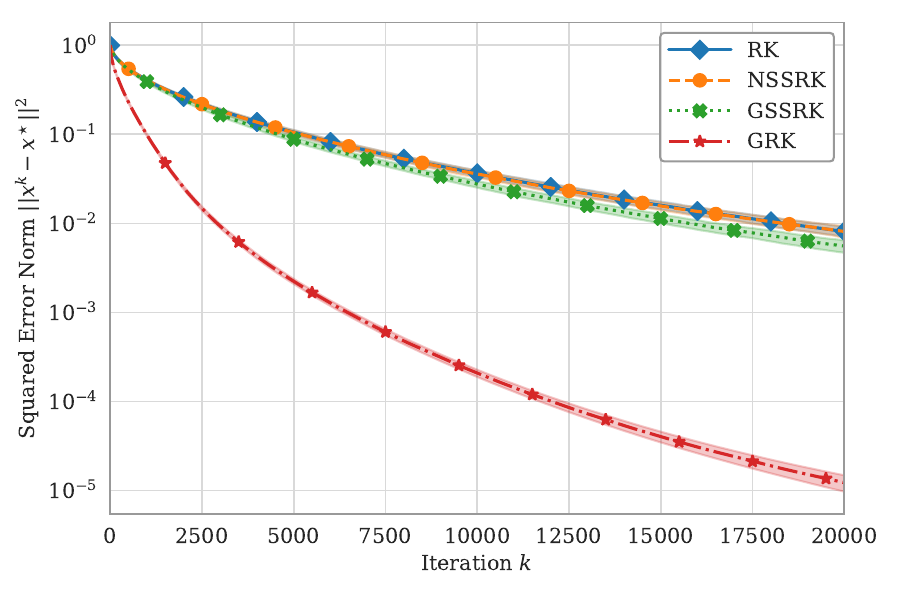}
         \captionsetup{font=small}
         \caption{N\_PID Matrix Row Norm Distribution $\sigma_{\min}= 0.0690$}
         \label{fig:ssrk_npid_row}
     \end{subfigure}
    \caption{\edit{Squared error norm versus iteration for RK, NSSRK, GSSRK, and GRK (RGRK with $\theta = 1/2$) using \emph{row norm probabilities} for RK, NSSRK and GSSRK. Results were averaged over 100 trials. \Cref{fig:ssrk_banded_row} is a 3-banded matrix, \Cref{fig:ssrk_circ_row} is a circulant matrix, and \Cref{fig:ssrk_cities_row,fig:ssrk_npid_row} are two real world matrices, Cities and N\_pid all described in detail above. The shading denotes one standard deviation.}} 
    \label{fig:ssrk_figure_row_norm}
\end{figure}

\edit{Next, we validate our theoretical results experimentally by varying $\sigma_{\min}$ and examining the convergence rates of GSSRK with \emph{uniform probabilities}, \Cref{algo:gssrk}. Since the SSRK convergence result, \Cref{thm:selectable-one-step}, depends on $\sigma_{\min}$, the smallest nonzero singular value, we construct three matrices with varying $\sigma_{\min}$ and observe their convergence rates. We construct circulant matrices of sizes $50 \times 50, 100 \times 100$ and $150 \times 150$ with ones along the diagonal, sub-diagonal and top right corner and corresponding $\sigma_{\min} = 0.126, \sigma_{\min} = 0.063$ and $\sigma_{\min} = 0.042$. The results shown in \Cref{fig:sig_min_experiment} support our theoretical guarantees. The results of the experiment on the circulant matrices were averaged over 100 trials for each matrix. The shading corresponds to one standard deviation above the mean and one standard deviation below the mean.}

\begin{figure}[ht]
    \centering
    \includegraphics[width=.5\textwidth]{ 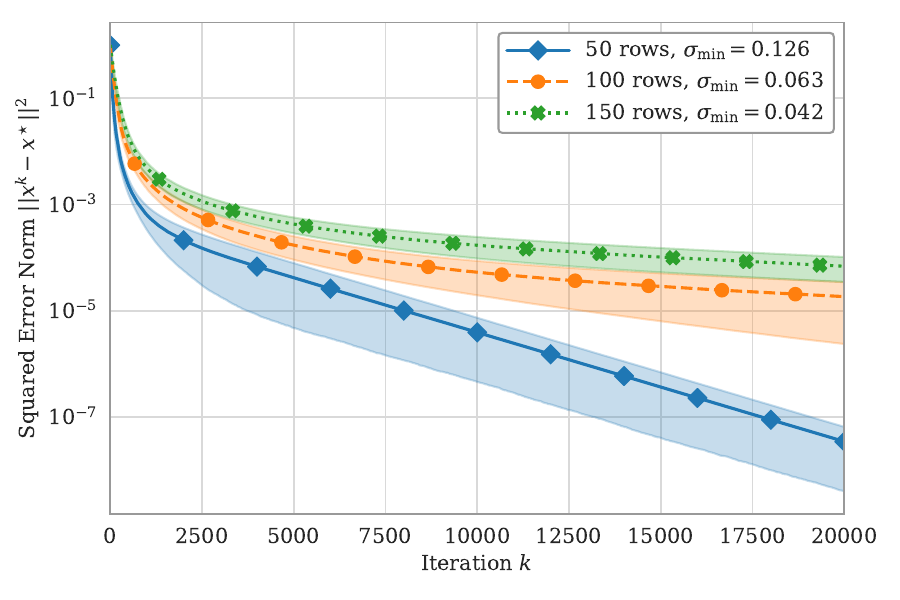} 
    \caption{\edit{Squared error norm versus iteration for GSSRK with uniform row distribution on varying circulant matrix sizes, corresponding to varying $\sigma_{\min}$, to confirm that as $\sigma_{\min}$ increases the convergence rate improves. 100 trials on circulant matrices of sizes: $50 \times 50, 100 \times 100 $ and $150 \times 150$. Each curve corresponds to the iteration average over the 100 trials and the shading corresponds to one standard deviation above and one standard deviation below the mean of norm-squared errors at each iteration.}}
    \label{fig:sig_min_experiment}
\end{figure}

\edit{In conclusion, our experiments \edit{support our theoretical bounds and we} observe that NSSRK and RK perform nearly identically. 
Since NSSRK has a minimal impact on the row sampling in comparison to RK methods, we do not expect improvement over RK specifically when $\nrows$ is large. 
Additionally we observe that GSSRK outperformed RK in some cases.
These cases include matrices with many orthogonal rows which is likely to occur when solving sparse systems. 
Even though we have improved convergence guarantees for both GSSRK and NSSRK over RK, we observe that the empirical performance of the algorithms may not reflect this.}
Finally, we note that GRK always outperforms GSSRK and NSSRK even though both methods have identical or improved theoretical guarantees over GRK \edit{when considering a $\sigma_{\min}(\mA)$  bound}. This supports the belief that there is a gap in the theoretical understanding of GRK and perhaps other Adaptive Kaczmarz methods \cite{gower2021adaptive}.

\FloatBarrier
\section{Conclusion}\label{sec:conclusion}
In this paper, we studied various selectable set approaches for the Kaczmarz method. 
These aimed to accelerate the standard approach of uniform random row sampling by attempting to move the iterates larger distances in each iteration, thereby reaching the desired solution in fewer steps. 
We proposed and analyzed a general selectable set randomized Kaczmarz method (\Cref{algo:selectablesetrk}) in which rows are sampled from a selectable set $\selectset$. 
The selectable set, $\selectset$, is updated at each iteration of the method and satisfies the condition that given a row of the matrix $j$, $j \not \in \selectset \implies \mA_j \xk = \vbj$. 
We proved a general convergence result for this method, \Cref{thm:selectable-bound}, in which we proved that there is an improvement inversely proportional to the size of the selectable set over Kaczmarz methods that sample from all rows. 

After we defined the general selectable set framework we examined several strategies such as the simple non-repetitive strategy (\Cref{algo:nssrk}) and the Gramian based strategy (\Cref{algo:gssrk}).
Although the non-repetitive strategy was quite simple and perhaps naive, it led to interesting theoretical results that are  comparable to those in state of the art methods such as the methods proposed in \cite{bai2018greedy,bai2018relaxed}, see \Cref{sec:bai-wu-compare}. This led us to believe that there is a theory gap in existing Kaczmarz literature and that a tighter analysis could be possible for some Kaczmarz methods. 

The Gramian based strategy leveraged the orthogonality structure of the matrix which is encoded in the Gramian, $\mG = \mA \mA^T$. There are many structured problems in which both the matrix $\mA$ and the Gramian $\mG$ are known such as graph-based semi-supervised learning \cite{bertozzi2018uncertainty}.
Leveraging the Gramian to update the selectable set yielded tighter convergence bounds than using the non-repetitive strategy, but with the added overhead of updating the selectable set using the Gramian. 
Bounds on GSSRK were originally proposed by Nutini et al. \cite{nutini2016convergence}; we generalized their results by constructing a bound in terms of the singular values of the original matrix, instead of
singular values of submatrices, which may be less natural to compute in applications.
We discussed the lower bound on the size of the selectable set for the Gramian update when dealing with structured problems, and noted that the size of the selectable set is usually on the order of number of rows $O(m)$. This relatively large size meant that there is a constant improvement of the convergence guarantee over corresponding non-selectable set methods.
Finally, we provided some numerical experiments on both synthetic and real-world matrices from the SuiteSparse package \cite{davis2011university} that demonstrated the benefits of using selectable sets and showcase how the variations compare. 

\subsection{Future directions}\label{sec:future}

We addressed the problem of projecting onto a row of an equation that has already been solved through sampling from a selectable set $\selectset$ of unsolved equations. \edit{In our methods, we consider the $p_i$ values as a fixed distribution that we renormalize based on the selectable set. Based on our one step convergence result, \Cref{thm:selectable-one-step}, one could try to optimize the probabilities $p_i$ themselves each iteration instead of drawing from this fixed distribution. This would require additional computational cost but may have the benefit of faster convergence rates.}

A \edit{different} simple yet surprisingly effective modification of the probability distribution at each iteration is \emph{sampling without replacement} \cite{patel2021implicit, recht2012toward}. 
In these methods a row can only be sampled again after all of the other rows of the matrix have been sampled. 
This contrasts the standard method of sampling \emph{with replacement} in which there is a fixed probability for a row to be selected at each iteration throughout the duration of the algorithm.
One possible explanation for why sampling without replacement shows empirical improvement over sampling with replacement could stem from the problem that selectable set methods aim to solve.
During some iterations of a Kaczmarz method, there is no improvement to the iterate because the method is projecting onto an already solved equation yielding no change to the iterate. However, empirically and theoretically, the selectable set method does not account for the improvement seen when sampling without replacement.  This suggests that there is more going on in the interaction between projections than simply being (near) orthogonal or sharing a solution space.

The Recht-R\'e conjecture \cite{recht2012toward} gave a theoretical explanation of why sampling without replacement outperforms sampling with replacement. Recent works 
\cite{lai2020recht, israel2016arithmetic} have proved that the Recht-R\'e conjecture does hold for smaller dimensions, giving a theoretical explanation of why sampling without replacement outperforms sampling with replacement. 
However, recently Lai et al. \cite{lai2020recht} were able to prove that for dimensions five or larger this conjecture is false. 
Thus the theory gap of proving why sampling without replacement outperforms  sampling with replacement in Kaczmarz methods is currently an open problem, that while seemingly related to selectable sets, is still not explained.

\edit{
\section*{Acknowledgements}
\edit{The authors are grateful to the reviewers and editor of this manuscript for their time and useful feedback. In addition, DN, WS and YY were partially supported by NSF DMS $\#2011140$. Additionally, YY was partially supported by NSF DMS $\#1737770$ and NSF DMS $\#2027277$.}
}

\bibliographystyle{unsrt}
\bibliography{main}
\end{document}